\def\b{\mathcal{B}}
\def\p{\partial}
\newcommand{\TV}{\textit{TV}}
\newcommand{\mix}{\textit{mix}}
\newcommand{\diss}{\textit{dis}}
\newcommand{\ka}{\kappa}
\newcommand{\tdis}{t_{\!\diss}}
\newcommand{\tmix}{t_\mix}
\newcommand{\pe}{\mathrm{Pe}}
\newcommand{\ep}{\varepsilon}
\newcommand{\cpl}{\textit{cpl}}
\newcommand{\eff}{\textit{eff}}
\renewcommand{\le}{\leqslant}
\renewcommand{\ge}{\geqslant}
\newcommand{\tcpl}{\tau_\cpl}
\providecommand{\one}[1]{\mathbbm{1}_{#1}}
\providecommand{\eqref}[1]{(\ref{#1})}
\DeclareMathOperator{\leb}{Leb}
\begin{document}
% Title junk
%\title[Mixing Time of Cellular Flows]{Mixing Time of Cellular Flows}
\title[Dissipation Enhancement of Cellular Flows]{Quantifying the dissipation enhancement of cellular flows.}

\author[Iyer]{Gautam Iyer}
\address{%
  Department of Mathematical Sciences,
  Carnegie Mellon University,
  Pittsburgh, PA 15213.}
\email{gautam@math.cmu.edu}
\author[Zhou]{Hongyi Zhou}
\address{%
  Department of Mathematics,
  University of Michigan,
  Ann Arbor, MI 48109.}
\email{hongyizh@umich.edu	}
\thanks{%
  This work has been partially supported by
  the National Science Foundation under
  grants DMS-2108080 to GI, 
  and the Center for Nonlinear Analysis.
}
\subjclass[2020]{Primary 
	35B40; %   	Asymptotic behavior of solutions to PDEs
	Secondary
	76M45, % Asymptotic methods, singular perturbations applied to problems in fluid mechanics
	76R05, % Fluid mechanics / Forced convection
	37A25. % Ergodicity, mixing, rates of mixing
}
\keywords{Enhanced dissipation, mixing time, cellular flow}

\begin{abstract}
	We study the dissipation enhancement by cellular flows.
	Previous work by Iyer, Xu, and Zlato\v{s} produces a family of cellular flows that can enhance dissipation by an arbitrarily large amount.
	We improve this result by providing quantitative bounds on the dissipation enhancement in terms of the flow amplitude, cell size and diffusivity.
	Explicitly we show that the \emph{mixing time} is bounded by the exit time from one cell when the flow amplitude is large enough, and by the reciprocal of the effective diffusivity when the flow amplitude is small.
	%$C / \sqrt{\kappa A}$ when $A \leq \frac{\kappa}{\epsilon^4}$, and
	%$C\abs{\ln \delta}^2 / (\epsilon^2 A)$ when $A \geq \kappa / \epsilon^4$.}
	%$C(\epsilon^2 / \kappa + \abs{\ln \delta}^2 / (\epsilon^2 A))$.
	%Here $\kappa$ is the diffusivity, $\epsilon$ is the cell size, $A/\epsilon$ is the flow amplitude, and $\delta = \sqrt{\kappa / A}$ is the thickness of the boundary layer.
	This agrees with the optimal heuristics.
	We also prove a general result relating the \emph{dissipation time} of incompressible flows to the \emph{mixing time}.
	The main idea behind the proof is to study the dynamics probabilistically and construct a successful coupling. 
	%Consider tracer particles advected by an incompressible cellular flow with small molecular diffusivity.
	%estimates the dissipation time in terms of the flow parameters. Here we obtain a closer-to-optimal upper bound on the mixing time of its associated diffusion, which results in a significantly improved bound on the dissipation time.
	%study the boundary crossing behaviors as like a random walk on cells and construct a coupling via synchronization and reflection. The estimate on the expected coupling time puts an upper bound on the mixing time in terms of the flow parameters. 
\end{abstract}

\maketitle

\section{Introduction}\label{s:intro}

Consider an insoluble dye in an incompressible fluid.
Stirring the fluid typically causes filamentation, stretching blobs of die into fine tendrils.
Diffusion, on the other hand, efficiently damps these small scales, and the combination of these two effects results in \emph{enhanced dissipation} -- the tendency of passive scalars to diffuse faster than in the absence of stirring.
This phenomenon has been extensively studied in many contexts, and various authors have established a link between mixing and dissipation enhancement~\cite{ConstantinKiselevEA08,Zlatos10,FengIyer19,CotiZelatiDelgadinoEA20}, studied dissipation enhancement in more general situations~\cite{Seis20,AlbrittonBeekieEA21,NobiliPottel22} and studied it extensively for shear flows~\cite{Taylor53,BedrossianCotiZelati17,Wei19,GallayZelati21,ColomboCotiZelatiEA21}.
Enhanced dissipation has also been used to suppress non-linear effects arising in certain situations~\cite{FannjiangKiselevEA06,KiselevXu16,FengFengEA20,IyerXuEA21}, and is a subject of active study.

The purpose of this work is to quantify dissipation enhancement for cellular flows, thus providing simple and explicit examples of flows with arbitrarily large dissipation enhancement.
Cellular flows arise as a model problem where ambient fluid velocity is a periodic array of opposing vortices.
They have been extensively studied in the context of fluid dynamics, homogenization and as random perturbations of dynamical systems~\cite{Childress79,ChildressSoward89,FannjiangPapanicolaou94,Koralov04,NovikovPapanicolaouEA05,DolgopyatKoralov08,Bakhtin11,HairerIyerEA18}.

We will use probabilistic techniques to estimate the \emph{mixing time} of a diffusion whose drift is a cellular flow.
%small random perturbation of a cellular flow.
We then estimate the dissipation enhancement in terms of the mixing time.
The bounds we obtain are significantly better than the bounds previously obtained in~\cite{IyerXuEA21}, and (up to a logarithmic factor) they agree with the optimal heuristic bounds.
%Our bounds also compare favorably to the bounds one might expect from using the \emph{effective diffusivity}, and we will discuss this in more detail below.

\subsection*{Acknowledgements}

We thank Andrej Zlato\v s and the anonymous referee for helpful comments that led to an improvement of the main result when $A \leq \kappa/\epsilon^4$.

\section{Main Result}

We will study the concentration of a dye, denoted by $\phi$, as a passive scalar, evolving according to the advection diffusion equation
\begin{equation}\label{e:phiEq}
	\p_t \phi - (u \cdot \grad) \phi - \frac{\ka}{2} \lap \phi = 0 \qquad \text{in } (0, \infty) \times \T^d\,.
\end{equation}
Here $-u$ represents the velocity field of the ambient fluid, and $\kappa/2 > 0$ is the molecular diffusivity.
We restrict our attention to the periodic $d$-dimensional torus $\T^d$ with side length~$1$, and we will normalize the initial concentration, $\phi_0$, so that
\begin{equation*}
\int_{\T^d} \phi_0(x) \, dx = 0\,.
\end{equation*}

As time evolves, the dye spreads uniformly across the torus and $\phi(\cdot, t) \to 0$ as~$t \to \infty$.
One measure of convergence rate that will interest us is the~\emph{dissipation time}: the time required for solutions to~\eqref{e:phiEq} to lose a constant fraction of their initial energy (see for instance~\cite{FannjiangNonnenmacherEA04,ConstantinKiselevEA08,FengIyer19}).
Explicitly, \emph{dissipation time}, denoted by $\tdis = \tdis(\kappa, u)$ is defined by
\begin{equation}\label{e:tdisDefPDE}
	\tdis \defeq \inf\set[\Big]{t \geq 0 \st \norm{\phi(s + t)}_{L^2} \leq \frac{1}{2} \norm{\phi(s)}_{L^2}\,~ \text{for all } s \geq 0,~ \phi(s) \in \dot L^2}\,.
\end{equation}
Here $\dot L^2$ denotes the space of all mean-zero, square integrable functions on the torus~$\T^2$.

The Poincar\'e inequality and the fact that $u$ is divergence free immediately imply
\begin{equation}\label{e:poincare}
\tdis(\kappa, u) \leq \frac{1}{4 \pi^2 \kappa}\,.
\end{equation}
However, this is only an upper bound, and the dissipation time may in fact be much smaller than $O(1/\kappa)$.
When this occurs (i.e.\ when $\tdis(u, \kappa) \leq o(1/\kappa)$) it is known as \emph{enhanced dissipation}.
Intuitively, enhanced dissipation when the stirring velocity field generates small scales (e.g. through filamentation), which are then damped much faster by the diffusion.

Seminal work of Constantin et\ al.~\cite{ConstantinKiselevEA08} provides a spectral characterization of (time independent) velocity fields for which $\tdis = o(1/\kappa)$.
%Certain conditions on~$u$ guarantee bounds that are much smaller than the above.
%\emph{Dissipation enhancement} occurs when 
%For shear flows (and one can show \cite{BedrossianCotiZelati17,GallayZelati21} that $\tdis \leq O(1/\kappa^{1/3})$, and
More explicit, improved bounds were recently obtained in terms of the mixing rate of~$u$.
For instance, if~$u$ is exponentially mixing then one can show $\tdis \leq O(\abs{\ln \kappa}^2)$ (see for instance~\cite{FengIyer19,Feng19,CotiZelatiDelgadinoEA20}).

In the context of applications, various authors have shown that sufficiently enhanced dissipation can be used to quench reactions, stop phase separation and prevent singularity formation (see for instance~\cite{FannjiangKiselevEA06,KiselevXu16,FengFengEA20,IyerXuEA21,FengMazzucato22,FengShiEA22}).
Thus finding simple and explicit examples of flows which sufficiently enhance dissipation (i.e.\ make~$\tdis$ arbitrarily small) are useful for many applications.
While such flows can be found by rescaling velocity fields with strong enough mixing properties (see for instance~\cites{FengFengEA20,IyerXuEA21}), examples of mixing velocity fields on the torus are notoriously hard to construct.
The main goal of this work is to provide a simple and explicit family of velocity fields for which~$\tdis$ can be made arbitrarily small. 
The family of flows we construct are two dimensional cellular flows.
These arise frequently in fluid dynamics as flows around strong arrays of opposing vortices and have been extensively studied~\cite{Childress79,RhinesYoung83,ChildressSoward89,FannjiangPapanicolaou94,Heinze03,NovikovPapanicolaouEA05,Koralov04}.

Given $\epsilon > 0$, consider the cellular flow~$v$ defined by
\begin{equation}\label{e:v}
v \defeq \grad^\perp (\xi H)
= \begin{pmatrix} - \partial_2 (\xi H) \\ \phantom-\partial_1 (\xi H) \end{pmatrix}
\,,
\qquad\text{where }
H(x) \defeq
\sin\paren[\Big]{ \frac{2 \pi x_1}{\epsilon} }
\sin\paren[\Big]{ \frac{2 \pi x_2}{\epsilon} }\, ,
\end{equation}
and $\xi$ is a smooth periodic cutoff function function such that
\begin{equation*}
	\xi(x) = \begin{cases}
		1 & \abs{H(x)} \leq \frac{1}{4}\,,\\
		0 & \abs{H(x)} \geq \frac{1}{2}\,.
	\end{cases}
\end{equation*}

\begin{figure}[htb]
\centering
\includegraphics[width=.5\textwidth]{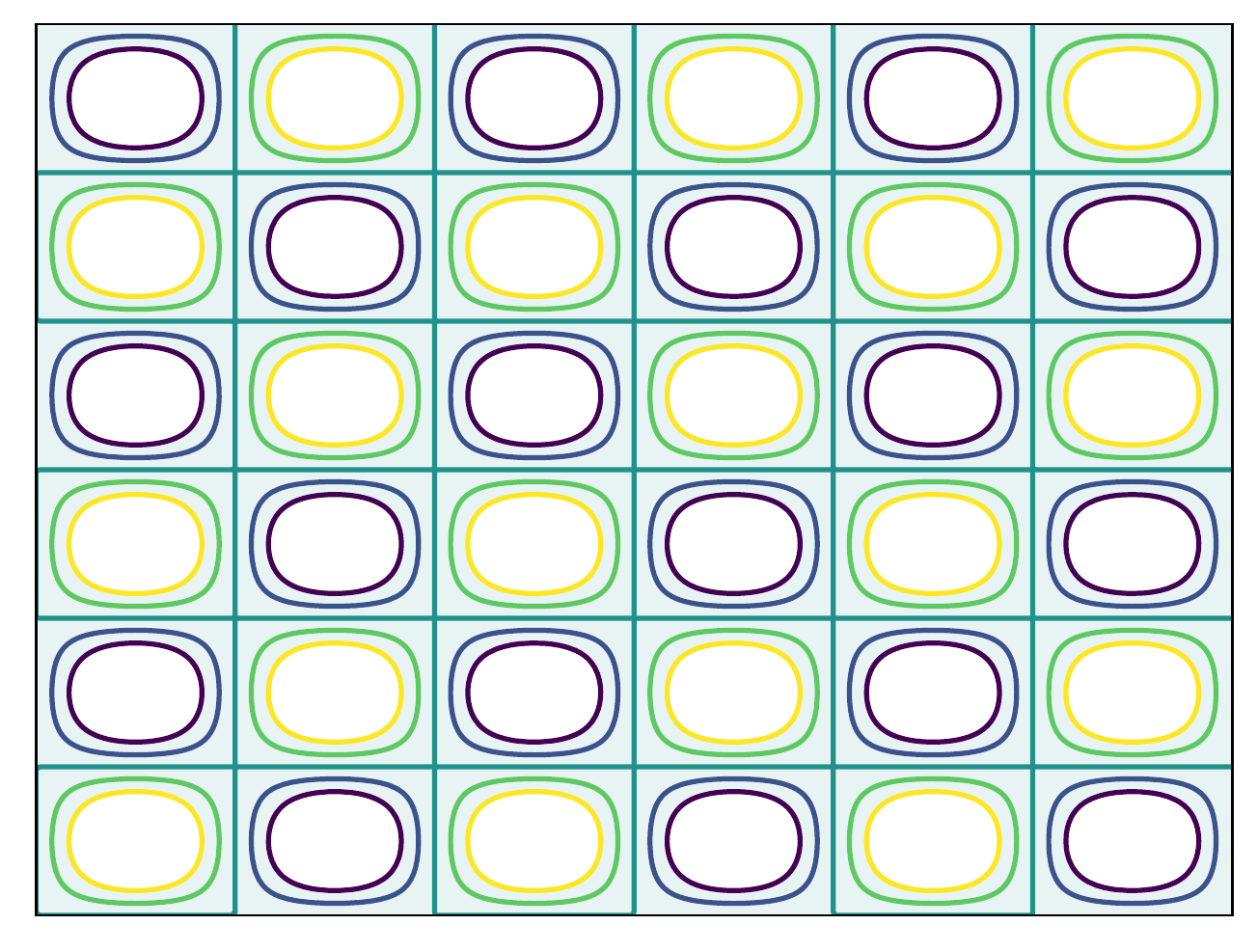}
\caption{%
	Stream lines of the cellular flow defined in equation~\eqref{e:v}.
	The flow is only non-zero in the shaded region.
}
\label{f:cflow}
\end{figure}
This flow has cell size~$O(\epsilon)$, and its stream lines are shown in Figure~\ref{f:cflow}.
Our main result chooses $u = A v$ for $A$ large, and estimates the \emph{mixing time} explicitly in terms of the flow amplitude~$A/\epsilon$, cell size~$\epsilon$ and diffusivity~$\kappa$ as follows.

\begin{theorem}\label{t:main}
	Suppose
	\begin{equation}
		\frac{\epsilon^2}{\kappa} \ll 1\,,\quad
		A \gg \frac{\kappa}{\epsilon^2} \,,
		\quad
		\delta = \sqrt{\frac{\kappa}{A}}\,,
		\quad
		u = A v\,,
	\end{equation}
	where $v$ is defined in~\eqref{e:v}.
	Then there exists a finite constant $C$, independent of $\epsilon$, $A$, and $\kappa$, such that 
	\begin{equation}\label{e:tdisBound}
		\tdis \leq 3 \tmix
		%\leq O \paren[\Big]{ \dfrac{\ep^2 \abs{\ln (A/\ka)} \abs{\ln \ka}}{\ka} + \dfrac{\abs{\ln (A/\ka)}^2 \abs{\ln \ka}}{A \ep^2} }\,.
		%\leq C \Bigl( \dfrac{\ep^2}{\ka} + \dfrac{\abs{\ln \delta}^2}{A \ep^2} \Bigr) \,,
		%\qquad\text{where }
		%\delta = \sqrt{\frac{\kappa}{A}}\,.
		\leq \begin{dcases}
			\frac{C \epsilon^2}{\kappa}
				& A \geq \frac{\kappa \abs{\ln \delta}^2}{\epsilon^4}\,,
			\\
			C\paren[\Big]{ \frac{\epsilon^2}{\kappa} + \frac{\abs{\ln \delta}^2}{\epsilon^2 A} }
				& \frac{\kappa}{\epsilon^4} \leq A \leq \frac{\kappa \abs{\ln \delta}^2}{\epsilon^4}\,,
			\\
			\frac{C}{\sqrt{\kappa A}}
				& \frac{\kappa}{\epsilon^2} \ll A \leq \frac{\kappa}{\epsilon^4}\,.
		\end{dcases}
	\end{equation}
\end{theorem}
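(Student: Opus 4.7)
The plan is to combine two ingredients. First, a general reduction, proved separately in the paper, gives $\tdis \leq 3\tmix$, so it suffices to bound the mixing time $\tmix$ of the SDE
\begin{equation*}
dX_t = u(X_t)\,dt + \sqrt{\kappa}\,dW_t.
\end{equation*}
Second, one constructs a successful coupling of two copies of this diffusion, and bounds the expected coupling time.

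The starting geometric observation is that inside each $\epsilon$-cell (where $\xi\equiv 1$) the deterministic trajectories of $u$ are closed level sets of the stream function $A\xi H$, so a natural change of variables is action--angle--like coordinates $(h,\theta)$. In these coordinates the generator decomposes schematically into a fast rotation in $\theta$ with period $\sim \epsilon^2/A$ (with a logarithmic divergence as $h\to 0$), a slow transverse diffusion in $h$ of order $\kappa\,\partial_h^2$, and tunnelling through a boundary layer of thickness $\delta=\sqrt{\kappa/A}$ into neighbouring cells. A classical homogenisation-type analysis then yields an effective cell-to-cell random walk with diffusivity of order $\sqrt{\kappa A}$.

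The coupling is built in three interlocking stages. \emph{Stage 1} (angular coupling): after a time of order $(\epsilon^2/A)\abs{\ln \delta}$ the angle of each particle is close to uniform on its current streamline, so the two angles can be identified by a synchronous or reflection coupling along the streamline. \emph{Stage 2} (stream-function coupling): averaging over the fast angle gives an effective one-dimensional diffusion in $h$ inside a cell, which couples the two levels in time $O(\epsilon^2/\kappa)$ (purely diffusive crossing of a cell). \emph{Stage 3} (cell coupling): couple the cell indices using the homogenised random walk on the quotient lattice $\epsilon\mathbb{Z}^2/\mathbb{Z}^2$, which by the effective-diffusivity estimate takes time $O(1/\sqrt{\kappa A})$. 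Summing these contributions, the coupling time is of order $(\epsilon^2/A)\abs{\ln \delta} + \epsilon^2/\kappa + 1/\sqrt{\kappa A}$, and a case analysis of which term dominates, together with a small amplification coming from repeated passages near the separatrix, recovers the three regimes in~\eqref{e:tdisBound}.

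The main technical obstacle is the separatrix/boundary-layer analysis supporting stages 1 and 3. Each passage of a trajectory near a hyperbolic stagnation point of $H$ produces a logarithmic delay, and this is precisely what yields the $\abs{\ln \delta}^2$ factor in the intermediate regime and forces the clean $\epsilon^2/\kappa$ bound only once $A$ is large enough that the boundary layer is essentially invisible on the $\epsilon$-scale. I would control this either via hitting-time estimates for the linearised saddle dynamics or, preferably, via a Lyapunov function adapted to the action--angle coordinates, which composes more naturally with a coupling argument. Once the stages are in place, the strong Markov property glues them together and delivers the claimed bound on $\tmix$.
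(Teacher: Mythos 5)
Your overall skeleton (reduce to $\tmix$ via $\tdis \le 3\tmix$, then bound $\tmix$ by a staged coupling that separates within-cell diffusion from cell-to-cell motion) matches the paper, but two steps would fail as written. First, your Stage 3 assigns the cell-index coupling a time $O(1/\sqrt{\kappa A})$ uniformly, by invoking the effective diffusivity $D_{\mathrm{eff}}\sim\sqrt{\kappa A}$. That homogenised random-walk picture is only valid for $A \lesssim \kappa/\epsilon^4$; for $A \ge \kappa/\epsilon^4$ the dynamics is in the averaging (Freidlin--Wentzell) regime, and the correct statement --- the technical heart of the matter --- is that the time needed for $n$ boundary-layer crossings scales like $(n\,\epsilon\,\delta\,\abs{\ln\delta})^2/\kappa$, i.e.\ quadratically in $n$ (the number of crossings up to time $t$ behaves like a local time, of order $\sqrt{\kappa t}/(\epsilon\delta\abs{\ln\delta})$), so the $n=O(1/\epsilon^2)$ crossings needed to mix the cell index cost $\abs{\ln\delta}^2/(\epsilon^2 A)$, not $n$ times a fixed per-crossing time. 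This is exactly why the theorem carries the extra $\abs{\ln\delta}^2/(\epsilon^2 A)$ term in the intermediate regime: your summed bound $(\epsilon^2/A)\abs{\ln\delta}+\epsilon^2/\kappa+1/\sqrt{\kappa A}$ would be strictly stronger than the theorem there, and the ``small amplification from repeated passages near the separatrix'' you invoke to fix this is precisely the estimate you have not supplied. (The paper obtains it from a rescaled crossing-number bound of Iyer--Novikov when $A\ge\kappa/\epsilon^4$, and switches to Koralov's expected-crossing-time bound, which \emph{is} linear in $n$ and gives $1/\sqrt{\kappa A}$, only when $A\le\kappa/\epsilon^4$.)

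Second, the coupling mechanics are unspecified exactly where they are delicate. The drift has Lipschitz constant of order $A/\epsilon$, so a generic Lindvall--Rogers reflection coupling yields bounds that degrade with $A$; the paper avoids this by exploiting the cutoff $\xi$ (a drift-free core in each cell, where the projected processes are genuine Brownian motions and can be reflection-coupled in time $O(\epsilon^2/\kappa)$ independent of $A$) and then the exact mirror symmetries of $v$, choosing for instance $\tilde B=(-B^1,B^2)$ so that the coordinates already matched stay matched while the remaining coordinate difference reduces to a one-particle hitting problem for a vertical line in the separatrix. Your three stages (angle, action, cell) do not explain how agreement achieved at one stage is preserved while the next stage runs, nor how the ``angular coupling along the streamline'' is carried out without an $A$-dependent loss; without such a mechanism the strong Markov property cannot simply glue the stages together. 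These are the two places where the argument needs genuinely new input rather than reorganisation; the remaining ingredients (the $O(\epsilon^2/\kappa)$ transverse-diffusion estimate, the $O(1/\epsilon^2)$ steps of the cell walk, and the reduction $\tdis\le 3\tmix$) are consistent with the paper.
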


\begin{@empty}
Here $\tmix = \tmix(u, \kappa)$ is the \emph{mixing time}, a notion that we describe in Section~\ref{s:mixingtime}, below.
We first compare Theorem~\ref{t:main} to the well known homogenization results that estimate the \emph{effective diffusivity}.
Recall standard results (see for instance~\cites{BensoussanLionsEA78,PavliotisStuart08}) show that the long time behavior of~\eqref{e:phiEq} is effectively that of the purely diffusive equation
\begin{equation}\label{e:DtBarPhi}
	\partial_t \bar \phi - \frac{1}{2} D_\eff \lap \bar \phi = 0\,,
\end{equation}
with an enhanced diffusion coefficient $D_\eff$, known as the \emph{effective diffusivity}.
The effective diffusivity of cellular flows has been extensively studied~\cite{Childress79,ChildressSoward89,FannjiangPapanicolaou94,Koralov04} and is known to asymptotically be
\begin{equation}
	D_\eff \approx O\paren{ \sqrt{\kappa A} }\,,
\end{equation}
as $\kappa \to 0$ (with $A$ fixed), or $A \to \infty$ (with $\kappa$ fixed).
Given this one would expect from~\eqref{e:poincare} that
\begin{equation}\label{e:tdisDeff}
	\tdis \approx \frac{1}{4 \pi^2 D_\eff} = O\paren[\Big]{\frac{1}{\sqrt{\kappa A}} }\,,
\end{equation}
and this is exactly~\eqref{e:tdisBound} when $A \leq \kappa / \epsilon^4$.

The reason one has a different bounds depending on the relative size of $A$ and  $\kappa / \epsilon^4$ is as follows.
One can consider the simultaneous limit of~\eqref{e:phiEq} as $\epsilon, \kappa \to 0$, $A \to \infty$.
In this case one can show that~$\phi$ either homogenizes, and behaves like the solution to the effective equation~\eqref{e:DtBarPhi}, or averages along stream lines and can be described by a diffusion on a Reeb graph~\cite{FreidlinWentzell12,PavliotisStuart08}.
This transition occurs precisely at $A \approx \kappa / \epsilon^4$, and was studied previously in~\cite{IyerKomorowskiEA14,HairerKoralovEA16,HairerIyerEA18}), and this explains the condition $A \leq \kappa / \epsilon^4$ in~\eqref{e:tdisBound}.

As explained earlier, when $A \leq \kappa / \epsilon^4$ the problem homogenizes and the upper bound in~\eqref{e:tdisBound} is consistent with the bound~\eqref{e:tdisDeff} obtained from homogenization.
When $A \geq \kappa / \epsilon^4$, the upper bound~\eqref{e:tdisDeff} can not hold.
Indeed, in one cell, movement in the direction transverse to stream lines of~$u$ happens through diffusion alone.
Thus the time taken for a dye to diffuse across one cell is at least $\epsilon^2 / \kappa$, and so we must have $\tdis \geq C \epsilon^2 / \kappa$.
Of course $A \geq \kappa / \epsilon^4$ is equivalent to $\epsilon^2 / \kappa \geq 1/\sqrt{\kappa A}$, and so~\eqref{e:tdisDeff} can not hold.

In the proof of Theorem~\ref{t:main} we will in fact show
\begin{equation}\label{e:tmixAveraging}
	\tmix \leq C\paren[\Big]{ \frac{\epsilon^2}{A} + \frac{ \abs{\ln \delta}^2 }{\epsilon^2 A} }\,,
\end{equation}
for all~$A \gg \kappa / \epsilon^2$.
This is of course weaker than~\eqref{e:tdisDeff} when $A \leq \kappa / \epsilon^4$, but better when $A \geq \kappa / \epsilon^4$.
Moreover, when $A$ is sufficiently large the second term is dominated by the first one, which is the bound stated in~\eqref{e:tdisBound}.
We will provide a quick heuristic explanation for~\eqref{e:tmixAveraging} later in this section.
\medskip

One application for Theorem~\ref{t:main} is to produce flows with a small dissipation time.
From~\eqref{e:tdisBound} we see that for fixed~$A, \kappa$, the choice of~$\epsilon$ that minimizes $\tmix$ is
\begin{equation*}
	\epsilon = \paren[\Big]{\frac{\kappa}{A}}^{1/4}\,.
		%= \sqrt{ \delta \abs{\ln \delta} }
		%= \paren[\Big]{\frac{\kappa}{A}}^{1/4} \paren[\Big]{ \ln\paren[\Big]{\frac{A}{\kappa}}  }^{1/2}\,. 
\end{equation*}
This choice of~$\epsilon$ leads to
\begin{equation}\label{e:tMixE2Ka}
	\tmix
		\leq \frac{C\epsilon^2}{\kappa}
		\,,
		%\leq \frac{C \delta \abs{\ln \delta} }{\kappa}
		%= \frac{C \abs{\ln \delta} }{\sqrt{\kappa A} }\,.
\end{equation}
which is time taken to diffuse through one cell.
By choice of $\epsilon$, we have $\epsilon^2 / \kappa = 1 / \sqrt{\kappa A}$, which vanishes as $A \to \infty$.
This provides a simple family of explicit flows with arbitrarily small (and explicit) dissipation time.
\end{@empty}

We %now discuss various choices of~$\epsilon$, $A$ and~$\kappa$ that will achieve this.
note that the first author, Xu and Zlato\v s~\cite{IyerXuEA21} have already shown that that the dissipation time of a sufficiently strong and fine cellular flow can be made arbitrarily small.
%We now compare Theorem~\ref{t:main} to previously available bounds.
%previous work of
The estimates in~\cite{IyerXuEA21}, however, are neither explicit nor optimal.
In particular, Theorem 1.3 in~\cite{IyerXuEA21} only asserts the existence of sufficiently strong and fine cellular flows with arbitrarily small~$\tdis$, without providing a quantitative bound.
A more explicit bound is provided in~\cite[Remark 6.6]{IyerXuEA21} which  yields a sub-optimal bound of the form $\tdis \leq C \log(A/\kappa) A^{-1/64} \kappa^{-1}$ after rescaling.
This is much weaker than~\eqref{e:tdisBound}, or the explicit $\epsilon^2 / \kappa$ bound described above.

\subsection{The mixing time}\label{s:mixingtime}

We now define the mixing time $\tmix$ appearing in~\eqref{e:tdisBound}.
This is typically used in probability to measure the rate convergence of Markov processes~\cite{LevinPeresEA09,MontenegroTetali06} to their stationary distribution.
In our case, the \emph{mixing time} is the minimum amount of time required for the fundamental solution of~\eqref{e:phiEq} to be $L^1$-close to the constant function~$1$.
%invariant distribution, which in our case is the uniform distribution.
That is, if $\rho(x, s; y, t)$ is the fundamental solution of~\eqref{e:phiEq}, the mixing time is defined by
\begin{equation}\label{e:tmixDef}
  \tmix \defeq \inf\set[\Big]{ t \geq 0 \st \sup_{x \in \T^d,\; s \geq 0} \int_{\T^d} \abs{ \rho(x, s; y, s+t) - 1} \, dy < \frac{1}{2}\,,~\forall s\geq 0 }\,.
\end{equation}

The mixing time and dissipation time are related to each other: the dissipation time is bounded by three times the mixing time.
The mixing time can also be bounded by the dissipation time, up to a logarithmic factor.
This is a general result and is not specific to cellular flows.

\begin{proposition}\label{p:tMixTDis}
	Let $u \in L^\infty( [0, \infty); W^{1, \infty}(\T^d) )$ be a divergence free vector field, and let $\tmix = \tmix(u, \kappa)$, $\tdis = \tdis(u, \kappa)$ denote the mixing time and dissipation time respectively.
  There exists a dimensional constant $C = C(d) < \infty$, independent of $u$ and $\kappa$ such that for all sufficiently small~$\kappa > 0$ we have
	\begin{equation}\label{e:disMix}
		\tdis
			\leq 3 \tmix
			\leq C \tdis \ln \paren[\Big]{ 1 + \frac{1}{\ka \tdis} }\,.
	\end{equation}
\end{proposition}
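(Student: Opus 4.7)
The statement splits into two parts.

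For the inequality $\tdis \leq 3\tmix$, the plan is an operator interpolation argument. Introduce the propagator $T_{s,t}$ acting on functions by $(T_{s,t}\phi)(x) = \int_{\T^d}\rho(x,s;y,t)\phi(y)\,dy$, so that the solution of~\eqref{e:phiEq} satisfies $\phi(\cdot,t) = T_{s,t}\phi(\cdot,s)$. Because $u$ is divergence free the uniform measure is invariant and a Fubini computation yields the $L^1$ contractivity $\|T_{s,t}\phi\|_{L^1} \leq \|\phi\|_{L^1}$ for every $\phi$. Next, for mean-zero $\phi$, the identity
\begin{equation*}
(T_{s,s+\tmix}\phi)(x) = \int_{\T^d}\bigl[\rho(x,s;y,s+\tmix) - 1\bigr]\phi(y,s)\,dy
\end{equation*}
combined with the mixing time definition gives $\|T_{s,s+\tmix}\phi\|_{L^\infty} \leq \tfrac12\|\phi\|_{L^\infty}$. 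Since the mean-zero subspace is complemented in every $L^p$, Riesz--Thorin interpolation produces $\|T_{s,s+\tmix}\phi\|_{L^2} \leq 2^{-1/2}\|\phi\|_{L^2}$ for every mean-zero $\phi$. Composing this bound across three consecutive windows of length $\tmix$ yields a contraction factor $2^{-3/2} < 1/2$ over time $3\tmix$, which is precisely the defining inequality for $\tdis$.

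For the reverse inequality $\tmix \leq C\tdis \log(1 + 1/(\kappa\tdis))$, the plan is to combine short-time smoothing with the $L^2$ decay implied by $\tdis$. Fix $x\in\T^d$ and $s \geq 0$, and let $f(y,t) = \rho(x,s;y,t) - 1$. This is a mean-zero solution of~\eqref{e:phiEq}, so iterating the definition of $\tdis$ gives
\begin{equation*}
\|f(\cdot,s+\tau+k\tdis)\|_{L^2} \leq 2^{-k}\|f(\cdot,s+\tau)\|_{L^2}
\end{equation*}
for every $\tau > 0$ and every positive integer $k$. To control the right-hand side despite the singular initial data $\delta_x - 1$ at time $s$, use a short-time ultracontractive bound $\|\rho(x,s;\cdot,s+\tau)\|_{L^\infty} \leq C_1(\kappa\tau)^{-d/2}$ and interpolate with the trivial $\|f(\cdot,s+\tau)\|_{L^1} \leq 2$ to obtain $\|f(\cdot,s+\tau)\|_{L^2} \leq C_2(\kappa\tau)^{-d/4}$. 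Since $\|g\|_{L^1} \leq \|g\|_{L^2}$ on the unit torus, the defining inequality $\sup_x \|f(\cdot,s+\tau+k\tdis)\|_{L^1} < 1/2$ for $\tmix$ is satisfied as soon as $2^{-k}(\kappa\tau)^{-d/4}$ drops below a universal constant, i.e.\ for $k$ of order $\log(1 + (\kappa\tau)^{-d/4})$. Optimizing by choosing $\tau$ of order $\tdis$ then gives $\tmix \leq \tau + k\tdis \leq C\tdis\log(1+1/(\kappa\tdis))$.

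The only non-elementary input is the short-time estimate $\|\rho(x,s;\cdot,s+\tau)\|_{L^\infty} \leq C(\kappa\tau)^{-d/2}$, and this is where I expect the most care to be needed. The bound is insensitive to the drift because $u$ is divergence free: the standard energy identity $\partial_t\|\rho\|_{L^2}^2 = -\kappa\|\nabla\rho\|_{L^2}^2$ still holds verbatim, and combined with the Nash inequality on $\T^d$ (exploiting $\|\rho(x,s;\cdot,t)\|_{L^1}=1$) it produces the desired ultracontractivity with no dependence on $u$. Once this smoothing estimate is in hand, the rest of the argument for both inequalities is essentially bookkeeping.
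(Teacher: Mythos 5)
Your second inequality is essentially the paper's own argument: a drift-independent Nash/ultracontractivity bound turns the singular data $\rho(x,s;\cdot,s+\tau)-1$ into an $L^2$ function of size $O((\ka\tau)^{-d/4})$, the definition of $\tdis$ is iterated to halve the $L^2$ norm $k$ times, and $\|\cdot\|_{L^1}\le\|\cdot\|_{L^2}$ closes the loop. The paper routes this through an $L^1\to L^\infty$ estimate (Lemma~\ref{l:L1Linf}, quoted from \cite{IyerXuEA21}), but the ingredients and the logarithmic bookkeeping are the same, so this half is fine.

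The first inequality, however, has a genuine gap at the interpolation step. The two endpoint bounds you interpolate are bounds for two \emph{different} operators: the $L^1$-contraction holds for the Markov operator $T_{s,s+\tmix}$, whose kernel is $\rho$, whereas the bound $\|T_{s,s+\tmix}\phi\|_{L^\infty}\le\tfrac12\|\phi\|_{L^\infty}$ holds only for mean-zero $\phi$, i.e.\ it is an endpoint bound for the operator $S$ with kernel $\rho-1$. Riesz--Thorin needs both endpoint bounds for one operator on the full scale, and for $S$ the $L^1\to L^1$ norm is not $\le 1$: the column integrals $\int_{\T^d}|\rho(x,s;y,s+\tmix)-1|\,dx$ are not controlled by the mixing time and are only $\le 2$, so interpolation gives $\|S\|_{L^2\to L^2}\le\sqrt{2\cdot\tfrac12}=1$, i.e.\ no contraction per window. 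Complementedness of the mean-zero subspace does not rescue the constant: the standard device of replacing $T$ by $T\circ P$ with $P\phi=\phi-\int_{\T^d}\phi$ costs $\|P\|_{L^1\to L^1},\|P\|_{L^\infty\to L^\infty}\le 2$ at the endpoints and yields only $\sqrt{2}$ per window; combining that with submultiplicativity of the total-variation distance salvages $\tdis\le 4\tmix$, not $3\tmix$. In fact your claimed per-window factor $2^{-1/2}$, if true at all, is exactly sharp, so it cannot follow from any lossy general principle: the doubly stochastic kernel $\rho(x,y)=1+\tfrac12 g(x)h(y)$, with $g=\pm1$ of mean zero and $h$ equal to $\pm2$ on sets of measure $\tfrac14$ each and $0$ elsewhere, satisfies $\int|\rho(x,y)-1|\,dy=\tfrac12$ for every $x$, yet $\|Sh\|_{L^2}=2^{-1/2}\|h\|_{L^2}$. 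The paper avoids the issue entirely: by Chapman--Kolmogorov the quantity $\sup_x\int_{\T^d}|\rho(x,s;y,s+t)-1|\,dy$ is submultiplicative, hence $\le 2^{-3}$ at $t=3\tmix$, and then a single Cauchy--Schwarz (a Schur test with row bound $\tfrac18$ and column bound $2$) gives $\|\theta_{3\tmix}\|_{L^2}^2\le 2\cdot\tfrac18\|\theta_0\|_{L^2}^2=\tfrac14\|\theta_0\|_{L^2}^2$. Replacing your interpolation step by this argument repairs the proof and recovers the stated factor $3$.
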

\begin{remark}
  %The above is for a torus with side length~$1$.
	By rescaling we see that on a torus with side length~$\ell$, the above becomes
	\begin{gather}\label{e:disMixEll}\tag{\ref*{e:disMix}$'$}
		\tdis
			\leq 3 \tmix
			\leq C \tdis \ln \paren[\Big]{ 1 + \frac{\ell^2}{\ka \tdis} }\,.
	\end{gather}
	for some dimensional constant $C=C(d)$ that is independent of~$\ell$.
\end{remark}

We are presently unaware whether or not the logarithmic factor is necessary.
We prove Proposition~\ref{p:tMixTDis} in Section~\ref{s:tdismix}, below.

\subsection{The main idea behind the proof.}

We now provide a non-technical description of the main idea behind the proof of Theorem~\ref{t:main}.
The Ito diffusion associated to~\eqref{e:phiEq} is defined by the SDE
\begin{equation}\label{eq:itodef}
	d X_t = A v(X_t) \, dt + \sqrt{\kappa} \, d B_t\,,
\end{equation}
on the $2$-dimensional torus $\T^2$.
Here $B$ is a standard $2$-dimensional Brownian motion.
Since $\dv v = 0$, the invariant measure of~$X$ is the Lebesgue measure on the torus.

\begin{@empty}
	To estimate the mixing time, let us first heuristically study the time taken for $X$ to start from a point~$x$ and reach a given point~$y$.
	To do this,~$X$ has to first exit the cell containing~$x$.
	Since movement transverse to stream lines occurs through diffusion alone, the process~$X$ will take $O(\epsilon^2 / \kappa)$ to exit a cell.
	After exiting this cell, the process~$X$ needs to explore the torus until it reaches the boundary of the cell containing~$y$.
	During this phase, the process~$X$ essentially performs one step of a random walk on the lattice of $1/\epsilon^2$ cells every time it crosses a boundary layer of thickness $\epsilon \delta$.
	(The thickness~$\epsilon \delta$ is chosen so that the time taken for~$X$ to cross the boundary layer through diffusion is comparable to the time taken for it travel around the boundary layer through convection.)
	Since the mixing time of a random walk on a 2D lattice of $1/\epsilon^2$ points is $O(1/\epsilon^2)$, the mixing time of~$X$ should be $O(\epsilon^2 / \kappa + T_\mathrm{cross})$, where $T_\mathrm{cross}$ is the expected time required to make $O(1/\epsilon^2)$ boundary layer crossings.

	We can (heuristically) estimate $T_\mathrm{cross}$ as follows.
	Since each boundary layer crossing happens through diffusion alone, $T_\mathrm{cross}$ should be comparable to $\tilde T_\mathrm{cross}$, where $\tilde T_\mathrm{cross}$ is the expected time taken for~$\tilde B^\kappa$ to make $1/\epsilon^2$ crossings over the interval $[0, \epsilon \delta]$.
	Here $\tilde B^\kappa$ is a doubly reflected Brownian motion on the interval $[0, \epsilon]$ with diffusivity~$\kappa$.

	On time scales smaller than $\epsilon^2/\kappa$, the process $\tilde B^\kappa$ won't feel the reflection at the right boundary $\epsilon$.
	Thus if $\tilde T_\mathrm{cross} \ll \epsilon^2 / \kappa$, then $\tilde T_\mathrm{cross}$ should be comparable to the time taken for a standard Brownian motion to make $O(1/\epsilon^2)$ crossings of the interval $[0, \epsilon \delta / \sqrt{\kappa}]$.
	%Since the expected number of crossings in time~$T$ is known to be of order $\sqrt{\kappa T} / \epsilon \delta = \sqrt{A T}/{\epsilon}$, 
	This quickly shows~$\tilde T_\mathrm{cross} = O(1/(\epsilon^2 A))$.
	Of course, when $A \geq \kappa / \epsilon^4$,  $1/(\epsilon^2 A) \leq \epsilon^2 / \kappa$, and so $\tilde T_\mathrm{cross} \leq \epsilon^2 / \kappa$.

	On time scales larger than $\epsilon^2 / \kappa$, the process $\tilde B^k$ mixes on the interval $[0, \epsilon \delta]$.
	The number of boundary layer crossings in time $T$ will become proportional to the ratio of the time $\tilde B^\kappa$ spends in the boundary layer to the expected exit time from the boundary layer.
	Using this we can check
	%that~$\tilde B^\kappa$ performs $O(1/\epsilon^2)$ boundary layer crossings in time $1 / \sqrt{\kappa A}$, provided $A \leq \kappa / \epsilon^4$.
	$\tilde T_\mathrm{cross} = O(1 / \sqrt{\kappa A})$.
	%Of course, when $A \leq \kappa / \epsilon^4$, $1/\sqrt{\kappa A} \geq \epsilon^2/ \kappa$, and so 

	This heuristic is what leads to Theorem~\ref{t:main}.
	Moreover, the above heuristic suggests a lower bound of the form  $O(\epsilon^2 / \kappa + T_\mathrm{cross})$, and hence the bounds in Theorem~\ref{t:main} should be optimal.
	We will make the above heuristic rigorous by constructing a \emph{successful coupling} of the process~$X$ (described in Section~\ref{sec:main}, below).
\end{@empty}

Before delving into the details we make three remarks:
First, the extra logarithmic factor $\abs{\ln \delta}$ in~\eqref{e:tdisBound} arises due to the logarithmic slow down of Hamiltonian systems as they approach hyperbolic critical points (all cell corners, in our case).
Second, the smooth cutoff~$\xi$ in~\eqref{e:v} is used to initiate the coupling of the projected processes in a time that is independent of~$A$.
Third, the explicit formula for~$H$ in~\eqref{e:v} is used to construct a simple coupling in subsequent steps using symmetry.
While the logarithmic factor~$\abs{\ln \delta}$ is unavoidable, both the smooth cutoff~$\xi$ and the explicit formula for~$H$ are mainly used to simplify technicalities in the proof.

\subsection*{Plan of this paper}
In the next section (Section~\ref{sec:main}) we prove Theorem~\ref{t:main}, modulo several technical lemmas bounding certain hitting times.
In Section~\ref{s:tdismix} we prove Proposition~\ref{p:tMixTDis}, relating the dissipation time and mixing time for general incompressible flows.
In Section~\ref{s:cellCoupling} we prove an $O(\epsilon^2 / \kappa)$ bound on the coupling time when the process is projected to a torus of side length~$\epsilon$.
Finally in Section~\ref{s:phase2} we prove the remaining lemmas stated in Section~\ref{sec:main} by counting boundary layer crossings.

\section{Proof of the Mixing Time Bound (Theorem~\ref{t:main})}\label{sec:main}

The goal of this section is to prove Theorem~\ref{t:main}.
In light of Proposition~\ref{p:tMixTDis}, we only need to bound the mixing time.
We will do this by a \emph{coupling construction}.
To fix notation, we will subsequently assume~$X$ and~$\tilde X$ are solutions of the SDEs
\begin{align}
  \label{e:sdeX}
  d X_t &= A v(X_t) dt + \sqrt{\ka} \, d B_t \,,
  \\
  \label{e:sdeXtilde}
  d \tilde{X}_t &= A v(\tilde{X}_t) dt + \sqrt{\ka} \, d \tilde{B}_t \,.
\end{align}
with initial data
\begin{equation*}
  X_0 = x\,,
  \quad
  \tilde X_0 = \tilde x
  \qquad
  \P^{(x, \tilde x)} \text{-almost surely}\,.
\end{equation*}
Here $B$ and $\tilde{B}$ are both 2D Brownian motions.
We will choose~$\tilde B$ in terms of $B$ in a manner that ensures a suitable bound on the \emph{coupling time}.
Recall the coupling time
\begin{equation*}
  \tcpl \defeq \inf \set{ t \geq 0 \st X_t = \tilde X_t }\,,
\end{equation*}
is the first time $X$ and $\tilde X$ meet, and standard results (see for instance~\cite[Ch.~5]{LevinPeresEA09}) guarantee
\begin{equation}\label{e:tMixTCouple}
  \tmix \leq C \sup_{(x, \tilde x) \in \T^2 \times \T^2} \E^{(x, \tilde x)} \tcpl\,.
\end{equation}

Thus our task is now to choose the Brownian motion $\tilde B$ and bound $\E\tcpl$.
The construction of~$\tilde B$ can be described quickly, however, the bound on~$\E\tcpl$ requires several technical lemmas.
Moreover, the proof when $A \geq \kappa / \epsilon^4$ differs from the proof when $A \leq \kappa / \epsilon^4$ differ in only one aspect -- the estimate of the coupling time.
For clarity of presentation we will describe the construction of~$\tilde B$ below assuming $A \geq \kappa / \epsilon^4$, and momentarily postpone the case when $A \leq \kappa / \epsilon^4$ and the lemmas bounding the coupling time.

\begin{proof}[Proof of Theorem~\ref{t:main} when $A \geq \kappa / \epsilon^4$]
  The coupling construction is divided into several stages, which we describe individually.
  \restartsteps
  \step[Coupling projections]
  Observe first that the drift~$v$ is periodic with period~$\epsilon$, and thus both~$X$ and~$\tilde X$ can be viewed as diffusions on a torus with side length~$\epsilon$.
  Let $\T^2_\epsilon = [0, \epsilon)^2$ be the two dimensional torus with side length~$\epsilon$, and let $\Pi_\epsilon \colon \T^2 \to \T^2_\epsilon$ be the projection defined by
  \begin{equation*}
    \Pi_\epsilon(x_1, x_2) = \paren[\big]{ x_1 \pmod \epsilon,~x_2 \pmod \epsilon}\,.
  \end{equation*}
  We will subsequently assume that $1/\epsilon \in \N$, so the above projection is well defined.
  (We also clarify that $\T^2$ above denotes the standard two dimensional torus with side length~$1$.)

  Consider the projected diffusions
  \begin{equation}\label{e:Ydef}
    Y = \Pi_\epsilon X\,, \qquad \tilde Y = \Pi_\epsilon \tilde X\,,
  \end{equation}
  on the torus $\T^2_\epsilon$.
  Since the drift~$v$ is divergence free one can use PDE methods to show that the mixing time of $Y$ is bounded by $O(\epsilon^2/ \kappa)$.
  This, however, is not sufficient for our purposes as we need a coupling between~$Y$ and $\tilde Y$ for subsequent steps, and we need the coupling time to be bounded independent of~$A$.
  %Thus we use techniques of Lindvall and Rogers~\cite{LindvallRogers86} to obtain a coupling of $(Y, \tilde Y)$ with
  We will couple~$Y$ and~$\tilde Y$ by waiting until they enter the central region of cells where $u = 0$.
  In this region~$Y$ and~$\tilde Y$ are simply Brownian motions, and we can couple them by reflection (see for instance~\cite{LindvallRogers86}), in time~$\tcpl^Y$ that is bounded independent of~$A$.
	Explicitly, we will show (Lemma~\ref{l:cellCoupling}, below) that
  \begin{equation}\label{e:ycouple}
    \E^{(x, \tilde x)} \tcpl^Y \leq \frac{C \epsilon^2}{\kappa} \,,
  \end{equation}
  for some finite constant $C$.
  Here, and subsequently, we will assume that the constant $C$ is independent of the parameters $\epsilon$, $A$, $\kappa$, the initial data $x, \tilde x$, and may increase from line to line.

  \step[Moving to vertical cell boundaries]
  By the Markov property, we may now restart time and assume that at time $0$ we have $\Pi_\epsilon X_0 = \Pi_\epsilon \tilde X_0$.
  In this step we will now choose $B = \tilde B$ and wait until $X$ and $\tilde X$ hit a vertical cell boundary.
  That is, we set
  \begin{equation}\label{e:sigmavDef}
    \sigma_v \defeq \inf \set[\Big]{ t \geq 0  \st X^1_t \in \frac{\epsilon}{2} \Z}\,,
    \qquad
    \tilde{\sigma}_v \defeq \inf \set[\Big]{ t \geq 0  \st \tilde X^1_t \in \frac{\epsilon}{2} \Z}\,,
  \end{equation}
  where $X^1, \tilde X^1$ denote the first coordinates process of $X$, and $\tilde X$ respectively.
  Periodicity of~$v$ will ensure~$\sigma_v = \tilde \sigma_v$, and we will show (Lemma~\ref{l:vhit}, below) that
  \begin{equation}\label{e:vhit}
		\E^x \sigma_v \leq \frac{C \epsilon^2 }{\kappa}
  \end{equation}

	\begin{figure}[htb]
		\centering
		\includegraphics[width=.7\textwidth]{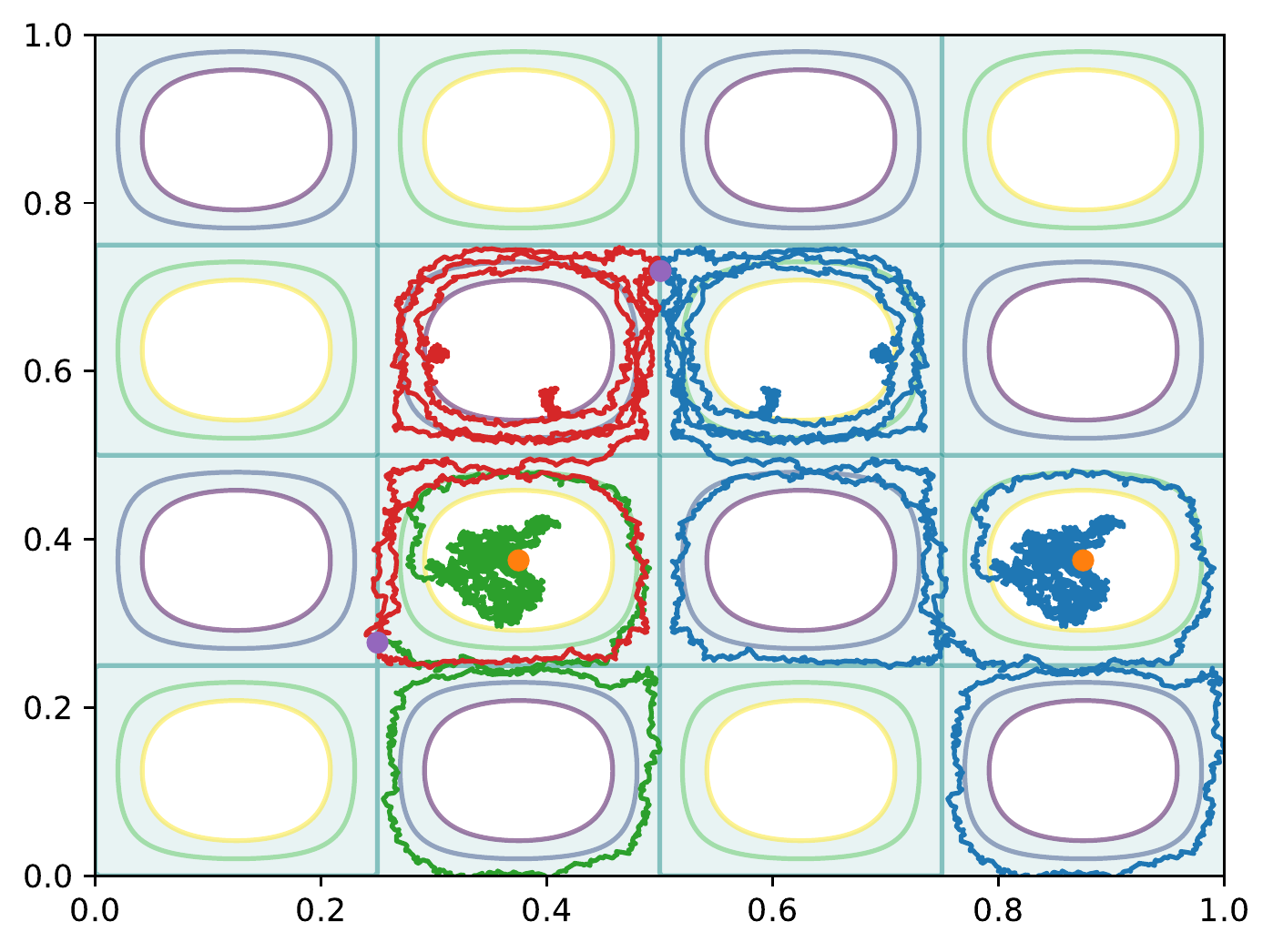}
		\caption{%
			Sample trajectories illustrating the coupling in steps 2 and~3.
			Here $X_0 = (0.75, 0.25)$, $\tilde X_0 = (0.25, 0.25)$, and the trajectory of~$X$ is shown in blue.
			Until $\tilde X$ hits a vertical cell boundary the trajectory of~$\tilde X$ (shown in green) is simply a shift of the trajectory of~$X$.
			After this time, the trajectory of~$\tilde X$ (shown in red) is a mirror image of the trajectory of~$X$ until they hit the same vertical line ($x = 0.5$ in this case).
		}
		\label{f:traject}
	\end{figure}

  \step[Vertical Coupling]
  By the Markov property again, we restart time and assume $\Pi_\epsilon X_0 = \Pi_\epsilon \tilde X_0$, and $X^1_0, \tilde X^1_0 \in \frac{\epsilon}{2} \Z$.
  We will now choose $\tilde B^1 = -B^1$ and $\tilde B^2 = B^2$, and wait until time $\tau_v$ defined by
  \begin{equation}\label{e:tauvDef}
    \tau_v \defeq \inf\set{t \geq 0 \st X^1_t = \tilde X^1_t}\,.
  \end{equation}
  Note that by symmetry of $v$ we will have%
  \footnote{We clarify here that $(\Pi_\epsilon X_t)^2$ refers to the second coordinate of $\Pi_\epsilon X_t$.}
  $(\Pi_\epsilon X_t)^2 = (\Pi_\epsilon \tilde X_t)^2$ for all $t \leq \tau_v$, and thus at time $\tau_v$ we will have~$\Pi_\epsilon X_t = \Pi_\epsilon \tilde X_t$.
	(See Figure~\ref{f:traject}, below, for an illustration of trajectories of~$X$ and~$\tilde X$ under this choice of noise.)
  We will show (Lemma~\ref{l:vcpl}, below) that
  \begin{equation}\label{e:vcpl}
    \E \tau_v \leq  \dfrac{C \abs{\ln \delta}^2}{A \ep^2}  \,.
  \end{equation}
  The proof of~\eqref{e:vcpl} requires a estimates on the number of times the flow crosses the boundary layer; this is technical, but has been well studied by numerous authors and the proofs can be readily adapted to our situation.

  \step[Horizontal hitting and coupling]
  At this point we have arranged for $\Pi_\epsilon X = \Pi_\epsilon \tilde X$, and $X^1 = \tilde X^1 \in \frac{\epsilon}{2} \Z$.
  As usual, we restart time and assume that the above happens at time $0$.
  We will now repeat steps 2 and 3 in the horizontal direction:
    First choose $\tilde B = B$ until $X^2,  \tilde X^2 \in \frac{\epsilon}{2} \Z$, then choose $\tilde B^1 = B^1$, $\tilde B^2 = - B^2$, and then wait until $X^2 = \tilde X^2$.
  The time taken for each of these steps is bounded in Lemmas~\ref{l:hhit} and~\ref{l:hcpl}, below.
  The symmetry of $v$ will ensure that when $X^2 = \tilde X^2$, we will also have $X^1 = \tilde X^1$, thus giving a successful coupling of $X, \tilde X$.
  \smallskip

  Using Chebychev's inequality, the above guarantees us a coupling of $X, \tilde X$ with probability at least $1/16$ in time at most twice the expected value of the stopping times in each of the above steps.
  %The expected coupling time is now bounded by the sum of the expected values of the stopping times in each of the above steps.
  Thus using the Markov property and Lemmas~\ref{l:cellCoupling}--\ref{l:hcpl}, below, we obtain a successful coupling with the coupling time bounded by
  \begin{equation}\label{e:couplingBound}
    \E^{(x, \tilde x)} \tau_\cpl \leq C \paren[\Big]{ \dfrac{\ep^2 }{\ka} + \dfrac{\abs{\ln \delta}^2}{A \ep^2}  }\,.
	\end{equation}
  Using~\eqref{e:tMixTCouple}, proves~\eqref{e:tmixAveraging} which proves~\eqref{e:tdisBound} when $A \geq \kappa/\epsilon^4$.
\end{proof}

It remains to bound the stopping times in each of the above steps.
For clarity of presentation we state each bound as a lemma below, and prove the lemmas in subsequent sections.
We assume $A \ge \ka / \ep^4$ through the following five lemmas. 

\begin{lemma}[Coupling of projections]\label{l:cellCoupling}
  There exists a Brownian motion~$\tilde{B}$ such that $(Y, \tilde{Y})$ is a coupling of $Y$ (on the torus $\T^2_\epsilon$), and the coupling time satisfies~\eqref{e:ycouple}.
\end{lemma}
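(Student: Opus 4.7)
The plan is to exploit the ``coupling pocket'' inside each cell on which the cutoff~$\xi$ --- and hence the drift~$v$ --- vanishes identically, by alternating between an independent waiting phase and a reflection coupling phase.

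Let $K \subset \T^2_\ep$ be the connected component of $\{y \in \T^2_\ep : \xi(y) = 0\}$ containing $(\ep/4, \ep/4)$; then $K$ is an open set of diameter of order~$\ep$ and measure of order~$\ep^2$ on which $v \equiv 0$.

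Waiting phase. I take $B$ and $\tilde B$ independent until the first time $T_1$ at which both $Y_{T_1}$ and $\tilde Y_{T_1}$ lie in $K$. To bound $\E T_1$, I would use that $v$ is divergence-free and $\ep$-periodic, so Lebesgue measure is invariant for $Y$ on $\T^2_\ep$; the Poincar\'e inequality then yields $\tdis(Y, \ka) \le \ep^2/(4\pi^2\ka)$ uniformly in $A$, and Proposition~\ref{p:tMixTDis} in the rescaled form~\eqref{e:disMixEll} upgrades this to $\tmix(Y, \ka) \le C\ep^2/\ka$, since the logarithmic factor is bounded when $\ka\tdis$ is of order $\ep^2$. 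Consequently $\P(Y_t \in K) \ge c_0 > 0$ for every starting point and every $t \ge C\ep^2/\ka$, with $c_0$ depending only on $|K|/|\T^2_\ep|$; applying the strong Markov property on successive intervals of length $C\ep^2/\ka$ separately for $Y$ and $\tilde Y$ then gives $\E T_1 \le C\ep^2/\ka$ independently of~$A$.

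Reflection coupling phase. Restarting from $Y_{T_1}, \tilde Y_{T_1} \in K$, I would define $\tilde B$ via the Lindvall--Rogers reflection coupling~\cite{LindvallRogers86}: with $e_t = (Y_t - \tilde Y_t)/|Y_t - \tilde Y_t|$, set $d\tilde B_t = (I - 2 e_t e_t^\top)\,dB_t$. As long as neither process exits~$K$, $v$ vanishes along the trajectory, and by L\'evy's theorem $|Y_t - \tilde Y_t|$ is a one-dimensional Brownian motion of diffusivity~$4\ka$. I run this until either $Y_t = \tilde Y_t$ (success) or one of $Y, \tilde Y$ exits~$K$ (failure). Since the initial separation and the diameter of $K$ are both of order~$\ep$, a standard scaling argument on Brownian hitting probabilities shows success occurs with probability at least some universal $p_0 > 0$, in expected time $O(\ep^2/\ka)$. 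On failure I restart the waiting phase; by the strong Markov property the number of attempts is stochastically dominated by a geometric random variable with parameter~$p_0$, each attempt uses $O(\ep^2/\ka)$ expected time, and so $\E\tcpl^Y \le C\ep^2/\ka$, which is~\eqref{e:ycouple}.

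The main obstacle is the waiting phase estimate: one needs $\E T_1 \le C\ep^2/\ka$ to be genuinely independent of the large parameter~$A$. This is precisely where the divergence-free structure of $v$ and the Poincar\'e-based mixing bound on $\T^2_\ep$ do the essential work; the rest is classical Brownian coupling.
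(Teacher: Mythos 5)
Your overall strategy is the paper's: wait (with independent noises) until both projected processes land in the drift-free pocket of a cell, using the Poincar\'e bound plus Proposition~\ref{p:tMixTDis} rescaled to $\T^2_\epsilon$ to get the $O(\epsilon^2/\kappa)$ waiting estimate, then reflection-couple while the drift vanishes, and iterate geometrically. The waiting phase is fine (it is exactly Lemmas~\ref{l:crudeMixTimeBd} and~\ref{l:mix1cell}). The gap is in the reflection phase: you claim a \emph{universal} success probability $p_0>0$ for arbitrary starting points $Y_{T_1},\tilde Y_{T_1}$ in the \emph{full} zero-drift component $K$, and that claim is false as stated. If one of the two processes starts at distance $\eta\ll\epsilon$ from $\partial K$ while the separation $|Y_{T_1}-\tilde Y_{T_1}|$ is of order $\epsilon$, then the separation (a Brownian motion of rate $4\kappa$) needs time of order $\epsilon^2/\kappa$ to reach $0$, whereas the probability that the boundary-adjacent process stays inside $K$ for that long is of order $\eta/\epsilon$ (gambler's-ruin scaling). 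So the success probability degenerates as the starting points approach $\partial K$, and no uniform $p_0$ exists over $K\times K$; once a process leaves $K$ the drift reappears and the mirror/Lindvall--Rogers structure no longer controls the separation.

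The fix is the device the paper uses: in the waiting phase, target a strictly smaller set than the zero-drift region, namely $U'=Q\cap\{H>h_0\}$ with $h_0\in(3/4,1)$ close to $1$, inside $U=Q\cap\{H>1/2\}$. Landing in $U'$ guarantees a buffer of width comparable to the separation, so the square $K$ of side $2R$ (with $R=\tfrac12|Y_{T_1}-\tilde Y_{T_1}|$) centered at $Y_{T_1}$, with one pair of sides parallel to the perpendicular bisector, lies entirely in $U$; with the \emph{fixed} mirror reflection, $\tilde Y$ stays the exact mirror image of $Y$, exit of $Y$ through the bisector side occurs with probability exactly $1/4$ by symmetry, and Chebyshev on the exit time gives the $O(\epsilon^2/\kappa)$ time bound (Lemma~\ref{l:coupleBeforeExit}). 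Since $|U'|/\epsilon^2$ is still a constant, the waiting-phase probability remains uniformly positive and the geometric iteration goes through, yielding~\eqref{e:ycouple}. With this one modification (shrink the target pocket so the coupling can complete before either process re-enters the region where $v\neq 0$), your argument is correct and essentially coincides with the paper's proof.
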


\begin{lemma}[Vertical boundary hitting time]\label{l:vhit}
  Suppose $\Pi_\epsilon X_0 = \Pi_\epsilon \tilde X_0$.
  Choose $\tilde B = B$, and let~$\sigma_v$ and $\tilde \sigma_v$ (equation~\eqref{e:sigmavDef}) be the first hitting times of $X$ and $\tilde X$ to the vertical cell boundaries, respectively.
  Then $\sigma_v = \tilde \sigma_v$ and equation~\eqref{e:vhit} holds.
\end{lemma}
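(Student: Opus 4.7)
The plan is to handle the two assertions separately. First, the equality $\sigma_v = \tilde\sigma_v$ follows from an elementary periodicity argument: since $\tilde B = B$ and $v$ is $\epsilon$-periodic, the difference $Z_t = X_t - \tilde X_t$ satisfies $dZ_t = A(v(X_t) - v(\tilde X_t))\,dt$ with $Z_0 \in \epsilon\Z^2$. On the finite subgroup $\epsilon\Z^2 \pmod{\Z^2}$ of $\T^2$, periodicity of $v$ forces $v(X_t) = v(\tilde X_t)$ and hence $dZ_t = 0$; combined with the continuity of $Z$, this traps $Z$ in the subgroup and gives $Z_t \equiv Z_0$ for all $t$. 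In particular $X^1_t - \tilde X^1_t \in \epsilon\Z$, and because $\tfrac{\epsilon}{2}\Z$ is invariant under $\epsilon\Z$-translation, $\sigma_v = \tilde\sigma_v$.

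For the quantitative bound, I would study the projected diffusion $Y = \Pi_\epsilon X$ on $\T^2_\epsilon$, whose generator $\mathcal{L} = Av\cdot\grad + (\kappa/2)\lap$ preserves normalized Lebesgue measure $\pi$. The hitting-time function $T(y) \defeq \E^y \sigma_v$ satisfies $\mathcal{L} T = -1$ on each connected component $D$ of $\T^2_\epsilon \setminus K$, where $K$ denotes the pair of vertical lines $\{y_1 = 0,\,\epsilon/2\}$, with $T|_{\partial D} = 0$. Multiplying by $T$ and integrating over $D$, the advection contribution $\int_D T\,v\cdot\grad T = \tfrac{1}{2}\int_D v\cdot\grad(T^2)$ drops out by incompressibility and the boundary condition, leaving
\[
	\int_D T\,dy = \frac{\kappa}{2}\int_D |\grad T|^2\,dy.
\]
Combining with the $1$D Poincar\'e inequality on the width-$\epsilon/2$ strip $D$ and the Cauchy--Schwarz inequality then yields $\int_D T \leq C\epsilon^4/\kappa$, i.e.\ the \emph{averaged} bound $\E_\pi \sigma_v \leq C\epsilon^2/\kappa$.

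The main obstacle is upgrading this average bound to the pointwise bound $\|T\|_\infty = O(\epsilon^2/\kappa)$: the P\'eclet number $A/\kappa$ is large, so classical Moser-type $L^2 \to L^\infty$ estimates are not directly applicable. I would circumvent this using the quantitative mixing from Lemma~\ref{l:cellCoupling}: the uniform bound $\E\tcpl^Y \leq C\epsilon^2/\kappa$ combined with the standard coupling-to-stationarity inequality gives a total variation mixing time $t_\star = O(\epsilon^2/\kappa)$ for $Y$. Setting $G = \{y : T(y) \leq 4\E_\pi T\}$, Markov's inequality forces $\pi(G) \geq 3/4$, so $\P^x(Y_{t_\star} \in G) \geq \tfrac{1}{2}$ uniformly in $x$. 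Iterating via the strong Markov property, the first time $\tau$ at which $Y_{k t_\star}$ enters $G$ satisfies $\E\tau \leq 2 t_\star$, and a final application of strong Markov at $\tau$ gives
\[
	\E^x \sigma_v \leq \E\tau + \sup_{y\in G} T(y) = O(\epsilon^2/\kappa),
\]
which is \eqref{e:vhit}.
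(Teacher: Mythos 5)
Your proposal is correct, but the route you take for the quantitative bound \eqref{e:vhit} is genuinely different from the paper's. The identification $\sigma_v=\tilde\sigma_v$ is the same one-line periodicity observation the paper makes; to make your ``trapping'' step airtight, the cleanest phrasing is that $X_t-(X_0-\tilde X_0)$ solves \eqref{e:sdeXtilde} with the same Brownian motion and initial datum $\tilde X_0$ (using $X_0-\tilde X_0\in\epsilon\Z^2$ and $\epsilon$-periodicity of $v$), so strong uniqueness gives $\tilde X_t=X_t-(X_0-\tilde X_0)$ for all $t$. For \eqref{e:vhit} the paper argues purely pathwise: with $V=\{y\in\T^2_\epsilon : y_1\in[\epsilon/2,\epsilon]\}$ and $\tmix(Y)\le C\epsilon^2/\kappa$ (Lemma~\ref{l:crudeMixTimeBd}, rescaled), at any $t\ge 2\tmix(Y)$ one has $\P(Y_t\in V)\ge 1/4$, and by continuity of trajectories the event $\{Y_t\in V\}$ already forces a crossing of a vertical boundary, so $\P(\sigma_v\le t)\ge 1/4$; iterating with the Markov property gives $\E\sigma_v\le 8\tmix(Y)$. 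You instead solve the Dirichlet problem $\mathcal{L}T=-1$ on the two strips, derive the energy identity $\int_D T=\tfrac{\kappa}{2}\int_D|\grad T|^2$ (the advection term does vanish by incompressibility and the zero boundary data), and combine Poincar\'e with Cauchy--Schwarz to get the averaged bound $\E_\pi\sigma_v\le C\epsilon^2/\kappa$, which you then upgrade to a pointwise bound using the same $O(\epsilon^2/\kappa)$ mixing input and a Markov iteration; that upgrade step ($\pi(G)\ge 3/4$, $\P^x(Y_{t_\star}\in G)\ge 1/2$, strong Markov at the first visit to $G$, with $\sigma_v\le\tau+\sigma_v\circ\theta_\tau$) is sound. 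Both proofs ultimately rest on $\tmix(Y)=O(\epsilon^2/\kappa)$; the paper's version needs no PDE for the hitting time (no regularity of $T$, no Poincar\'e) and is shorter, while yours buys a drift-independent averaged estimate that stands on its own without any mixing input, the mixing being needed only for the $L^\infty$ upgrade. The only points to spell out in your version are the standard facts that $T$ is a classical solution of the Dirichlet problem on each strip (uniform ellipticity, smooth coefficients) so the integrations by parts are justified, and that Lemma~\ref{l:cellCoupling} converts to a total-variation mixing bound via the coupling inequality (or simply quote Lemma~\ref{l:crudeMixTimeBd} with $\ell=\epsilon$, as the paper does).
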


\begin{lemma}[Vertical coupling]\label{l:vcpl}
  %\GI[2022-08-15]{TODO: Add checking $(\Pi_\epsilon X_t)^2 = (\Pi_\epsilon \tilde X_t)^2$ to proof.}
  Suppose $\Pi_\epsilon X_0 = \Pi_\epsilon \tilde X_0$, and $X_0^1 \in \frac{\epsilon}{2} \Z$.
  %$\tilde X_0^1 \in \frac{\epsilon}{2} \Z$.
  Let $\tilde B = (-B^1, B^2)$, and let~$\tau_v$ (equation~\eqref{e:tauvDef}) be the first time first time $X$ and $\tilde X$ are on the same vertical line.
  Then~$(\Pi_\epsilon X_t)^2 = (\Pi_\epsilon \tilde X_t)^2$ for all $t \leq \tau_v$, the expected value of~$\tau_v$ is bounded by~\eqref{e:vcpl}.
\end{lemma}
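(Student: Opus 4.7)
The plan is to exploit the reflection symmetry of the stream function $\xi H$ about vertical lines in $\tfrac{\epsilon}{2}\Z$, reduce $\tau_v$ to a hitting time for the horizontal coordinate $X^1$ on the torus, and then bound that hitting time by counting boundary-layer crossings of $X$.

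First, I would verify the symmetry. For any $a \in \tfrac{\epsilon}{2}\Z$ the product $\sin(2\pi x_1/\epsilon)\sin(2\pi x_2/\epsilon)$ is anti-symmetric under $x_1 \mapsto 2a - x_1$, while the cutoff $\xi$ depends on $x$ only through $\abs{H}$ and is therefore invariant under the same reflection. Consequently $\xi H$ is anti-symmetric and, differentiating,
\begin{equation*}
  v_1(2a - x_1, x_2) = -v_1(x_1, x_2), \qquad v_2(2a - x_1, x_2) = v_2(x_1, x_2).
\end{equation*}
The hypotheses $\Pi_\epsilon X_0 = \Pi_\epsilon \tilde X_0$ and $X_0^1 \in \tfrac{\epsilon}{2}\Z$ force $\tilde X_0^1 - X_0^1 \in \epsilon\Z$, so I may pick $a \in \tfrac{\epsilon}{2}\Z$ representing $\tfrac{1}{2}(X_0^1 + \tilde X_0^1)$ modulo $1$, and $c \in \epsilon\Z$ so that $\hat X_t \defeq (2a - X_t^1,\; X_t^2 + c)$ satisfies $\hat X_0 = \tilde X_0$ on $\T^2$. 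A direct It\^o computation, combined with the displayed symmetry and the $\epsilon$-periodicity of $v$, shows that $\hat X$ solves \eqref{e:sdeXtilde} driven by $\tilde B = (-B^1, B^2)$. Pathwise uniqueness then yields $\tilde X_t = \hat X_t$ for every $t \geq 0$. In particular $(\Pi_\epsilon X_t)^2 = (\Pi_\epsilon \tilde X_t)^2$ identically, and
\begin{equation*}
  \tau_v = \inf\set{ t \geq 0 : X_t^1 \equiv a \pmod{\tfrac{1}{2}} }.
\end{equation*}

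To bound $\E\tau_v$ I would rely on the boundary-layer analysis that will form the core of Section~\ref{s:phase2}. The heuristic is the one spelled out in the introduction: in the regime $A \gg \kappa/\epsilon^2$ the process $X$ circulates rapidly along the streamlines of $H$ and transitions between neighboring cells only through short diffusive excursions across a boundary layer of width $O(\epsilon\delta)$. Each such crossing shifts $X^1$ by $\pm\epsilon/2$, so the horizontal cell index of $X$ performs an approximate simple random walk on the $O(1/\epsilon)$-site lattice $\tfrac{\epsilon}{2}\Z/\Z$; to hit the prescribed target set this walk requires on the order of $1/\epsilon^2$ steps. Each boundary-layer excursion meanwhile has expected duration $O(\abs{\ln\delta}^2/A)$, since the Hamiltonian period along a streamline at level $\abs{H} = O(\delta)$ is logarithmically large due to the hyperbolic saddles of $H$ at the cell corners. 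Multiplying the number of steps by the per-step cost yields the claimed bound $\E\tau_v \leq C\abs{\ln\delta}^2/(A\epsilon^2)$.

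The principal obstacle is making the crossing-count heuristic rigorous. Following the framework developed by Koralov~\cite{Koralov04} and Hairer-Iyer et al.~\cite{HairerIyerEA18}, I would pass to the induced random walk on the Reeb graph of $H$, compute the cell-exit distributions via harmonic-measure arguments, verify that they are asymptotically uniform among neighbors, and apply a renewal estimate to the resulting Markov chain. The expected excursion time is then controlled by the standard passage-time estimate near hyperbolic saddles — which supplies the $\abs{\ln\delta}^2$ factor — while classical hitting-time bounds for a one-dimensional random walk on $\tfrac{\epsilon}{2}\Z/\Z$ contribute the $1/\epsilon^2$ factor, completing the proof of \eqref{e:vcpl}.
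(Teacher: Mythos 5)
Your first step is fine and coincides with the paper's: the reflection symmetry of $v$ about lines in $\frac{\epsilon}{2}\Z$, the identification of $\tilde X$ with a reflected copy of $X$ (the paper records this as \eqref{e:symmetry}), and the reduction of $\tau_v$ to a hitting time of $X^1$ to a vertical line contained in the separatrix (the bisector $\ell_1$). Making this precise via pathwise uniqueness is a perfectly good way to justify it.

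The gap is in the time estimate. Your quantitative mechanism is a renewal argument: roughly $1/\epsilon^2$ boundary-layer excursions, each of expected duration $O(\abs{\ln\delta}^2/A)$. Neither ingredient is correct, and their product matches \eqref{e:vcpl} only by accident. The streamline period at level $\abs{H}\sim\delta$ is $O(\epsilon^2\abs{\ln\delta}/A)$, not $O(\abs{\ln\delta}^2/A)$ (speed $O(A/\epsilon)$ over a path of length $O(\epsilon)$ with a logarithmic corner slowdown). More seriously, the \emph{expected} duration of one full cycle (exit $\b_\delta$, return to the separatrix) is dominated by the rare event, of probability $O(\delta)$, that the process escapes deep into a cell interior and needs time $O(\epsilon^2/\kappa)$ to return; this makes the expected cycle time of order $\epsilon^2/\sqrt{\kappa A}$, so summing expectations over $C_1/\epsilon^2$ cycles can only give $O(1/\sqrt{\kappa A})$ --- i.e.\ the bound \eqref{e:vcplPrime} of the other regime, which is strictly weaker than \eqref{e:vcpl} once $A\gg \kappa\abs{\ln\delta}^4/\epsilon^4$. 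Moreover crossings do not accumulate linearly in time: by the local-time structure of the transverse diffusion, the time needed for $n$ crossings at fixed confidence scales like $n^2(\epsilon\delta\abs{\ln\delta})^2/\kappa$, quadratically in $n$. The paper's proof is built around exactly this point: it uses the tail bound of Lemma~\ref{l:nthhits}, $\P(\tau_n^1\le t)\ge (1-cn\epsilon\delta\abs{\ln\delta}/\sqrt{\kappa t})^+$ (a rescaling of Iyer--Novikov), rather than per-excursion expectations; it shows via a second-moment lower bound and fourth-moment upper bound on the lifted displacement $S_n=\hat X^1_{\tau^1_n}$ (Lemmas~\ref{l:pullback} and~\ref{l:tauvN-hit-ell1}) that $\P(\abs{S_n}\ge 1)\ge p_0$ for $n=C_1/\epsilon^2$, so $X$ hits $\ell_1$ within $n$ crossings with probability $p_0$ --- no Reeb-graph chain, harmonic-measure computation, or uniformity of cell-exit distributions is needed; and it then iterates the fixed-probability event $\P(\tau_v\le t_1)\ge p_0/2$, $t_1=C\abs{\ln\delta}^2/(\epsilon^2 A)$, through the Markov property to bound $\E\tau_v$. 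Without quantile-type crossing estimates and an anti-concentration bound of this kind, your renewal sketch cannot deliver \eqref{e:vcpl}.
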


\begin{lemma}[Horizontal boundary hitting time]\label{l:hhit}
  Suppose that $\Pi_\epsilon X_0 = \Pi_\epsilon \tilde X_0$, and $X_0^1 = \tilde X^1_0 \in \frac{\epsilon}{2} \Z$.
  Choose $\tilde B = B$, and let
  \begin{equation*}
    \sigma_h \defeq \inf \set[\Big]{ t \geq 0  \st X^2_t \in \frac{\epsilon}{2} \Z}\,,
    \qquad
    \tilde \sigma_h \defeq \inf \set[\Big]{ t \geq 0  \st \tilde X^2_t \in \frac{\epsilon}{2} \Z}\,,
  \end{equation*}
  be the first hitting time to the horizontal cell boundaries.
  Then
  \begin{equation*}
    \sigma_h = \tilde \sigma_h\,,
    \qquad
    X^1_{\sigma_h} = \tilde X^1_{\tilde \sigma_h}\,,
    \qquad\text{and}\qquad
    \E \sigma_h
    = \E \tilde \sigma_h
    \leq \frac{C \epsilon^2}{\kappa}\,.
  \end{equation*}
\end{lemma}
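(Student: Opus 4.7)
The plan is to split the lemma into two independent claims: the three pathwise identities, and the $O(\epsilon^2/\kappa)$ bound on $\E\sigma_h$. The first is routine; the second reduces to Lemma~\ref{l:vhit} via the $x_1 \leftrightarrow x_2$ symmetry of the cellular flow.

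For the pathwise identities, the plan is to invoke strong uniqueness. Since $\tilde B = B$ and the drift $v$ is $\epsilon$-periodic in both coordinates, both $\Pi_\epsilon X$ and $\Pi_\epsilon \tilde X$ solve the same SDE on $\T^2_\epsilon$ with identical driving noise and initial condition $\Pi_\epsilon X_0 = \Pi_\epsilon \tilde X_0$, so strong uniqueness yields $\Pi_\epsilon X_t = \Pi_\epsilon \tilde X_t$ for all $t \ge 0$. Lifting to $\R^2$, the displacement $D_t \defeq X_t - \tilde X_t$ takes values in the discrete set $\epsilon\Z^2$, and continuity of paths forces $D_t \equiv D_0$. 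The assumption $X^1_0 = \tilde X^1_0$ gives $D^1_0 = 0$, hence $X^1_t = \tilde X^1_t$ for all $t$, while $D^2_0 \in \epsilon\Z \subset \frac{\epsilon}{2}\Z$. These two facts immediately imply $\sigma_h = \tilde\sigma_h$ and $X^1_{\sigma_h} = \tilde X^1_{\tilde\sigma_h}$.

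For the bound on $\E\sigma_h$, the idea is to exploit the symmetry $H(x_1, x_2) = H(x_2, x_1)$ together with the fact that $\xi$ depends on $x$ only through $|H|$. Writing $\sigma$ for the coordinate swap $(x_1, x_2) \mapsto (x_2, x_1)$, these identities give $\xi H \circ \sigma = \xi H$; a short computation from $v = \grad^\perp(\xi H)$ then yields $v \circ \sigma = -\sigma \circ v$, so the process $Y_t \defeq \sigma(X_t)$ satisfies
\[
	d Y_t = -A v(Y_t)\, dt + \sqrt{\kappa}\, d\hat B_t,
	\qquad \hat B \defeq \sigma(B).
\]
Because $-H(x) = H(x + (\epsilon/2, 0))$ and $\xi$ has period $\epsilon/2$ in each coordinate, $-v$ is merely a translate of $v$ by $(\epsilon/2, 0)$; thus $Y$ has the same law as the diffusion in Lemma~\ref{l:vhit} started from a shifted initial condition. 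Since $\sigma$ carries the set $\{x_2 \in \frac{\epsilon}{2}\Z\}$ onto $\{y_1 \in \frac{\epsilon}{2}\Z\}$, and this set is invariant under horizontal $\epsilon/2$-translation, the horizontal hitting time $\sigma_h$ of $X$ coincides with the vertical hitting time of this translate of $X$. Lemma~\ref{l:vhit} therefore supplies $\E \sigma_h \le C\epsilon^2/\kappa$.

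The only substantive step is the expectation bound, which the symmetry argument defers cleanly to Lemma~\ref{l:vhit}; alternatively one could repeat that lemma's proof verbatim with the roles of $x_1$ and $x_2$ exchanged. The extra hypothesis $X^1_0 \in \frac{\epsilon}{2}\Z$ is not used and can only help: on a vertical separatrix the drift pushes $X^2$ monotonically toward the nearest corner in $(\frac{\epsilon}{2}\Z)^2$, which already lies on a horizontal separatrix, so hitting times should in fact be shorter than the worst-case bound established above.
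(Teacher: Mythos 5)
Your proof is correct and takes essentially the same route as the paper, which simply observes that the argument for Lemma~\ref{l:vhit} applies verbatim with the roles of $x_1$ and $x_2$ exchanged; your conjugation by the coordinate swap $\sigma$ plus the translation $x_1 \mapsto x_1 + \epsilon/2$ is just a formal packaging of that symmetry, and your pathwise-identity argument via constancy of $X_t - \tilde X_t \in \epsilon\Z^2$ matches what the paper leaves implicit. The only caveat is that your reduction uses symmetries of $\xi$ (e.g.\ that $\xi$ is a function of $\abs{H}$), which the paper does not state explicitly but does assume implicitly, as the symmetry identities for $v$ listed in Section~\ref{sb:cpl} require exactly this.
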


\begin{lemma}[Horizontal coupling]\label{l:hcpl}
  Suppose $\Pi_\epsilon X_0 = \Pi_\epsilon \tilde X_0$, $X_0^1 = \tilde X^1_0$, and $X^2_0 \in \frac{\epsilon}{2} \Z$.
  Choose $\tilde B = (B^1, -B^2)$ and let
  \begin{equation*}
    \tau_h = \inf \set[\Big]{t \geq 0 \st X^2_t = \tilde X^2_t} \,,
  \end{equation*}
  be the first time $X$ and $\tilde X$ are on the same horizontal line.
  Then 
  \begin{equation}\label{e:tauHbound}
    \tau_h = \tilde \tau_h\,,
    \qquad
    X_{\tau_h} = \tilde X_{\tau_h}\,,
    \qquad\text{and}\qquad
    \E \tau_h \le \dfrac{C \abs{\ln \delta}^2}{A \ep^2} \,.
  \end{equation}
\end{lemma}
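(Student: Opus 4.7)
The plan is to deduce Lemma \ref{l:hcpl} from Lemma \ref{l:vcpl} using the invariance of the stream function $H$ in \eqref{e:v} under the coordinate swap $\sigma(x_1, x_2) = (x_2, x_1)$. Because the cellular flow looks identical with horizontal and vertical directions interchanged, no new ideas are needed beyond those already developed for the proof of Lemma \ref{l:vcpl}.

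First I would verify the geometric setup. The hypotheses $\Pi_\epsilon X_0 = \Pi_\epsilon \tilde X_0$ and $X^1_0 = \tilde X^1_0$ force $\tilde X^2_0 - X^2_0 \in \epsilon \Z$, and since $X^2_0 \in \tfrac{\epsilon}{2}\Z$, the midpoint $m \defeq \tfrac12 (X^2_0 + \tilde X^2_0)$ also lies in $\tfrac{\epsilon}{2}\Z$. Because $\sin(2\pi(2m - x_2)/\epsilon) = -\sin(2\pi x_2/\epsilon)$ for $m \in \tfrac{\epsilon}{2}\Z$ and $\xi$ depends on $x$ only through $\abs{H}$, one has $(\xi H)(x_1, 2m - x_2) = -(\xi H)(x_1, x_2)$. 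Differentiating this identity yields the reflection relations
\begin{equation*}
    v_1(x_1, 2m - x_2) = v_1(x_1, x_2), \qquad v_2(x_1, 2m - x_2) = -v_2(x_1, x_2).
\end{equation*}
With the choice $\tilde B = (B^1, -B^2)$, these identities propagate through the SDEs \eqref{e:sdeX}--\eqref{e:sdeXtilde}: one checks by a direct pathwise argument that $\tilde X^1_t = X^1_t$ for all $t$, and $\tilde X^2_t = 2m - X^2_t$ up to time $\tau_h$. Consequently $X^2_{\tau_h} = m = \tilde X^2_{\tau_h}$, and combined with $X^1_{\tau_h} = \tilde X^1_{\tau_h}$ this produces the full coupling $X_{\tau_h} = \tilde X_{\tau_h}$ claimed in \eqref{e:tauHbound}.

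For the bound $\E \tau_h \le C \abs{\ln \delta}^2 / (A \epsilon^2)$, I would invoke the swap $\sigma$ directly. From $H \circ \sigma = H$ a short calculation shows $v \circ \sigma = -\sigma v$, so the paired processes $(\sigma X, \sigma \tilde X)$ solve SDEs of the form \eqref{e:sdeX}--\eqref{e:sdeXtilde} with $A$ replaced by $-A$ and with the noise coupling of the ``opposite first coordinate'' type used in Lemma \ref{l:vcpl}. The stopping time $\tau_h$ for the original process is exactly the corresponding vertical hitting time for $\sigma X$. Because the boundary-layer geometry of $-v$ is identical to that of $v$, and because the crossing count that underlies \eqref{e:vcpl} depends only on $\abs{A}$, the bound of Lemma \ref{l:vcpl} transfers verbatim.

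The only technicality — which I would not really call an obstacle — is confirming that $m \in \tfrac{\epsilon}{2}\Z$; if the midpoint failed to lie on a symmetry axis of $v$, the reflection coupling would not be preserved by the drift and the whole reduction would break. Every other ingredient is a routine coordinate-swap rephrasing of the vertical case.
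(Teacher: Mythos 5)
Your proposal is correct and follows essentially the same route as the paper, which simply declares the proof ``identical to the proof of Lemma~\ref{l:vcpl}'' (via the coordinate swap) and adds the observation that at $\tau_h$ the processes coincide fully; your reflection identities and the verification that $\sigma X$, $\sigma \tilde X$ solve the vertical-coupling SDEs for the (translated, hence equivalent) flow $-Av$ just make that symmetry reduction explicit.
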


Each of these lemmas will be proved in subsequent sections.
Finally, we conclude this section by stating the modifications necessary to prove Theorem~\ref{t:main} in the case where $A \leq \kappa / \epsilon^4$.
\begin{proof}[Proof of Theorem~\ref{t:main} when $A \leq \frac{\kappa}{\epsilon^4}$]
	The coupling construction is identical to the construction in the case where $A \geq \frac{\kappa}{\epsilon^4}$.
	The only differences are the bounds on the hitting time: 
	\begin{enumerate}\reqnomode
	  \item 
			In Lemma~\ref{l:vcpl}, the vertical hitting time bound~\eqref{e:vcpl} should be replaced by
			\begin{equation}\label{e:vcplPrime}
				\tag{\ref{e:vcpl}$'$}
				\E \tau_v \leq \dfrac{C }{\sqrt{\kappa A}} \,.
			\end{equation}
		\item
			In Lemma~\ref{l:hcpl}, the horizontal hitting time in~\eqref{e:tauHbound} needs to be replaced with
			\begin{equation}\label{e:tauHboundPrime}
				\tag{\ref{e:tauHbound}$'$}
				\E \tau_h \le \dfrac{C }{\sqrt{\kappa A}} \,.
			\end{equation}
	\end{enumerate}
	Once these are established, we obtain the coupling time bound
	\begin{equation}\label{e:couplingBoundPrime}
		\tag{\ref{e:couplingBound}$'$}
		\E^{(x, \tilde x)} \tau_\cpl \leq C \paren[\Big]{ \dfrac{\ep^2 }{\ka} + \dfrac{1}{\sqrt{\kappa A}}  }\,.
	\end{equation}
	instead of~\eqref{e:couplingBound}.
	Using~\eqref{e:tMixTCouple} this implies
	\begin{equation}
		\tmix \leq C \paren[\Big]{ \dfrac{\ep^2 }{\ka} + \dfrac{1}{\sqrt{\kappa A}}  } \,,
	\end{equation}
	which implies~\eqref{e:tdisBound} when $A \leq \kappa / \epsilon^4$.
\end{proof}

\section{Relationship between the dissipation time and mixing time}\label{s:tdismix}

In this section we prove Proposition~\ref{p:tMixTDis} which relates the mixing time and the dissipation time of general incompressible flows.
%develop present the relation between the dissipation time and the mixing time.
%This is not specific to cellular flows, and is true for any Lipschitz drift~$u$.
%specific to cellular flows, and we state and prove a general version of it here.
Throughout this section we will assume~$u \in L^\infty( [0, \infty); W^{1, \infty}(\T^d) )$ is a divergence free vector field, and let~$X$ be the (time inhomogeneous) Markov process on $\T^d$ defined by the SDE
\begin{equation}\label{e:sdeXu}
  dX_t = u(X_t) \, dt + \sqrt{\kappa} \, dB_t\,.
\end{equation}
Here $B$ is a standard $d$-dimensional Brownian motion on the torus.

Let $\rho(x, s; y, t)$ be the transition density of~$X$.
By the Kolmogorov equations, we know that~$\rho$ is the fundamental solution to~\eqref{e:phiEq}, and thus the mixing time of~$X$ is given by~\eqref{e:tmixDef}.
Using the Kolmogorov equations again, the dissipation time (defined in~\eqref{e:tdisDefPDE}) can be equivalently defined by
\begin{equation}\label{e:tdisDef}
  \tdis \defeq  \inf\set[\Big]{ t \geq 0 \st \sup_{x \in \T^d,\; s \geq 0} \norm{ \E^{(\cdot,s)} \vartheta(X_{s+t}) }_{L^2} < \frac{1}{2} \norm{\vartheta}_{L^2} \,,~\forall \vartheta\in \dot L^2(\T^d)  }\,.
\end{equation}
Recall $\dot L^2(\T^d)$ is the set of all mean zero $L^2$ functions on the torus $\T^d$, and $\E^{(x,s)}$ denotes the expected value under the probability measure $\P^{(x,s)}$ under which $X_s = x$ almost surely.
The constant~$1/2$ in~\eqref{e:tdisDefPDE}, \eqref{e:tmixDef} and~\eqref{e:tdisDef} is chosen for convenience.
Replacing it by any constant that is strictly smaller than~$1$ will only change $\tmix$ and~$\tdis$ by a constant factor that is independent of~$u$, $\kappa$ and $d$.

The first inequality in~\eqref{e:tmixDef} can be proved elementarily, and we do that first.
\begin{lemma}\label{l:disMix}
  The dissipation time and mixing time satisfy the inequality
	\begin{equation*}
		\tdis
			\leq 3 \tmix\,.
	\end{equation*}
\end{lemma}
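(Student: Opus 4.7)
The plan is to interpolate via Riesz-Thorin between the trivial $L^\infty$ contraction of the propagator and an $L^1$ contraction that follows from the mixing hypothesis. I introduce the solution operator $T_{s,t}$ defined by $(T_{s,t}\phi_0)(y) = \int_{\T^d} \rho(x,s;y,t)\phi_0(x)\,dx$. Since $u$ is divergence-free, $\int \rho\,dx = \int \rho\,dy = 1$, so $T_{s,t}$ preserves means and restricts to an operator $\dot L^p \to \dot L^p$ for every $p \in [1, \infty]$. The maximum principle (equivalently the pointwise estimate $|T_{s,t}\phi(y)| \leq \|\phi\|_{L^\infty}\int \rho\,dx$) gives $\|T_{s,t}\|_{\dot L^\infty \to \dot L^\infty} \leq 1$.

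Next I would upgrade the mixing condition to a quantitative $L^1$ decay. For $\phi \in \dot L^1$, the mean-zero property lets us rewrite
\[ T_{s,t}\phi(y) = \int \bigl(\rho(x,s;y,t) - 1\bigr)\phi(x)\,dx, \]
so that by Fubini
\[ \|T_{s,t}\phi\|_{L^1} \leq \int |\phi(x)| \int |\rho(x,s;y,t) - 1|\,dy\,dx \leq \beta(s,t)\|\phi\|_{L^1}, \]
where $\beta(s,t) := \sup_x \int |\rho(x,s;y,t) - 1|\,dy$. The definition of $\tmix$ gives $\beta(s, s+\tmix) < 1/2$ uniformly in $s \geq 0$.

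Finally, I would chain this estimate over three consecutive intervals of length $\tmix$: by the semigroup property,
\[ T_{s,\,s+3\tmix} = T_{s+2\tmix,\,s+3\tmix} \circ T_{s+\tmix,\,s+2\tmix} \circ T_{s,\,s+\tmix}, \]
so the composition has $\dot L^1 \to \dot L^1$ norm at most $\beta^3 < 1/8$ and $\dot L^\infty \to \dot L^\infty$ norm at most $1$. Riesz-Thorin interpolation on the scale of mean-zero subspaces then gives
\[ \|T_{s,\,s+3\tmix}\|_{\dot L^2 \to \dot L^2} \leq \sqrt{(1/8)\cdot 1} = \frac{1}{2\sqrt{2}} < \frac{1}{2}, \]
which is precisely the dissipation condition at time $3\tmix$. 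The only real subtlety is that Riesz-Thorin is being applied on $\dot L^p$ rather than on full $L^p$; this is standard since $\dot L^p$ is the kernel of the mean functional (bounded on each $L^p$), yielding $[\dot L^1, \dot L^\infty]_{1/2} = \dot L^2$. For a self-contained argument one can instead run the three-lines lemma directly for $z \mapsto \langle T_{s,\,s+3\tmix}\phi_z, \psi_z\rangle$ with Hadamard-style complex interpolants of test functions in $\dot L^2$; this bypasses any appeal to interpolation of subspaces and is really the only place the proof requires any care.
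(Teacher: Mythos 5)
Your final step is where the argument as written does not close: you invoke Riesz--Thorin with constant $1$ on the mean-zero scale, but interpolation of subcouples is not automatic. For this (complemented) subcouple the identification $[\dot L^1,\dot L^\infty]_{1/2}=\dot L^2$ holds only up to constants tied to the mean-removing projection $P\phi=\phi-\int_{\T^d}\phi$, whose norm on $L^1$ and on $L^\infty$ is $2$, and a factor $2$ is fatal here since $2\sqrt{1/8}>1/2$. Your three-lines fallback suffers from the same defect: the standard interpolants $\abs{\phi}^{2/p(z)}\sgn(\phi)$ of a mean-zero $\phi$ are not mean-zero, so on the $L^1$ line you cannot invoke your decay estimate, which is only available for mean-zero inputs. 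Hence the stated conclusion $\norm{T_{s,s+3\tmix}}_{\dot L^2\to\dot L^2}\le 1/(2\sqrt2)$ is not justified.

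The repair is short and stays inside your framework: work with $\tilde T_{s,t}\phi(y)\defeq\int_{\T^d}(\rho(x,s;y,t)-1)\phi(x)\,dx$, which is defined on all of $L^p$, agrees with $T_{s,t}$ on mean-zero data, and satisfies $\norm{\tilde T_{s,t}}_{L^1\to L^1}\le\beta(s,t)$ as well as $\norm{\tilde T_{s,t}}_{L^\infty\to L^\infty}\le\sup_y\int_{\T^d}\abs{\rho(x,s;y,t)-1}\,dx\le 2$ (using $\int\rho\,dx=1$, which follows from incompressibility). The three factors still compose, $\tilde T_{s+2\tmix,s+3\tmix}\tilde T_{s+\tmix,s+2\tmix}\tilde T_{s,s+\tmix}=\tilde T_{s,s+3\tmix}$, because each factor outputs mean-zero functions, so genuine full-space Riesz--Thorin gives $\norm{T_{s,s+3\tmix}}_{\dot L^2\to\dot L^2}\le(2\cdot 2^{-3})^{1/2}=1/2$, which suffices. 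Two further remarks: the paper's dissipation time is phrased for $\vartheta\mapsto\E^{(\cdot,s)}\vartheta(X_{s+t})$, the $L^2$-adjoint of your forward operator $T_{s,t}$; the restrictions to $\dot L^2$ have equal norms, so this is harmless but should be said. And after the repair your proof is essentially the paper's: the paper's weighted Cauchy--Schwarz estimate $\norm{\theta_t}_{L^2}^2\le 2\norm{\theta_0}_{L^2}^2\sup_x\int\abs{\rho-1}\,dy$ is exactly this $L^1$--$L^\infty$ (Schur-type) bound for the kernel $\rho-1$, and its Chapman--Kolmogorov iteration is your composition of three factors of $\beta\le 1/2$.
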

\begin{proof}
  For simplicity, and without loss of generality, we will assume that $s = 0$ in both~\eqref{e:tmixDef} and~\eqref{e:tdisDef}.
  Let $\theta_t(x) \defeq \E^{(x,0)} \theta_0(X_t) = \E^x \theta_0(X_t)$ for some $\theta_0$ in $\dot L^2$.
  Since $\theta_0$ is mean~$0$, we note
  \begin{equation*}
    \theta_t(x)
      = \int_{\T^d} \rho(x, 0; y, t) \theta_0(y) \, dy
      = \int_{\T^d} (\rho(x, 0; y, t) - 1) \theta_0(y) \, dy \,,
  \end{equation*}
  and hence
  \begin{align}\label{e:thetaL21}
    \nonumber
    \norm{\theta_t}_{L^2}^2
      &\leq \paren[\Big]{
	  \int_{\T^d \times \T^d} \abs{ \rho(x, 0; y, t) - 1 } \, dy \, dx
	}
      \cdot
    \\
      &\qquad\qquad
	\paren[\Big]{
	    \int_{\T^d \times \T^d } \theta_0(y)^2 (\rho(x, 0; y, t) + 1) \, dx \, dy
	  }\,.
  \end{align}
  Since the Lebesgue measure is invariant, we note
  \begin{equation*}
    \int_{\T^d} \rho(x, 0; y, t) \, dx = 1\,,
    \qquad\text{for every }y \in \T^d\,,
  \end{equation*}
  and hence~\eqref{e:thetaL21} implies
  \begin{equation}\label{e:thetaL2}
  \norm{\theta_t}_{L^2}^2 \leq 2 \norm{\theta_0}_{L^2}^2 \paren[\Big]{
	  \sup_{x \in \T^d} \int_{\T^d} \abs{ \rho(x, 0; y, t) - 1 } \, dy
	}\,.
  \end{equation}
  
  The Chapman Kolmogorov equations and invariance of the Lebesgue measure immediately imply
  \begin{equation}\label{e:ntmix}
    \sup_x \int_{\T^d} \abs{\rho(x, 0; y, n \tmix ) - 1} \leq \frac{1}{2^n}\,,
  \end{equation}
  for any natural number~$n \in \N$.
  Choosing $t = 3 \tmix$ and using~\eqref{e:thetaL2}, we see that~\eqref{e:ntmix} immediately implies $\norm{\theta_{3\tmix}}_{L^2}^2 \leq \frac{1}{4} \norm{\theta_0}_{L^2}^2$, which finishes the proof.
\end{proof}

The proof of the second inequality in~\eqref{e:disMix} follows from Proposition~4.2 in~\cite{IyerXuEA21}, which provides an~$L^1$ to $L^\infty$.
We reproduce this here for convenience, and then go on to prove the second inequality in~\eqref{e:disMix}.

\iffalse
\begin{proposition}\label{p:tStarPQ}
  For any $p, q \in [1, \infty]$ there exists a constant $C = C(d, p, q)$ such that for all sufficiently small $\kappa$ we have
  \begin{equation*}
    t^*_p \leq C_{p,q} t^*_q \ln \paren[\Big]{ 1 + \frac{1}{\kappa t^*_q} }\,.
  \end{equation*}
\end{proposition}
\fi

\begin{lemma}[Proposition 4.2 in~\cite{IyerXuEA21}]\label{l:L1Linf}
  There exists a constant $C = C(d)$, independent of $u$, such that for all $\vartheta \in \dot L^2$, and all sufficiently small~$\kappa > 0$ we have
  \begin{equation}\label{e:L1Linf}
    \norm{\E^{(x, s)} \vartheta(X_{s+t}) }_{L^\infty} \leq \frac{1}{2} \norm{\vartheta}_{L^1}\,,
    \quad\text{for all }
    s \geq 0\,,~
    t \geq C \tdis \ln \paren[\Big]{1 +  \frac{1}{\kappa \tdis}  } \,.
  \end{equation}
\end{lemma}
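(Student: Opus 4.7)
The plan is to combine short-time $L^1\to L^2$ and $L^2\to L^\infty$ smoothing estimates for the propagator $S(s,t)\vartheta(x)\defeq\E^{(x,s)}\vartheta(X_{s+t})$ with the iterated $L^2$-decay provided by~\eqref{e:tdisDef}. First I would record two facts that hold for any divergence-free drift~$u$. Because $\dv u = 0$, multiplying the backward Kolmogorov equation by $\abs{S(s,t)\vartheta}^{p-2}S(s,t)\vartheta$ and integrating gives the $L^p$-contraction $\norm{S(s,t)\vartheta}_{L^p}\leq\norm{\vartheta}_{L^p}$ for every $p\in[1,\infty]$, and the mean of $S(s,t)\vartheta$ is preserved. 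Combined with the $L^2$-energy identity $\partial_t\norm{S(s,t)\vartheta}_{L^2}^2 = -\ka\norm{\nabla S(s,t)\vartheta}_{L^2}^2$ and the Nash inequality on~$\T^d$ for mean-zero functions,
\begin{equation*}
\norm{f}_{L^2}^{2+4/d}\leq C_d\norm{\nabla f}_{L^2}^2\norm{f}_{L^1}^{4/d}\,,
\end{equation*}
this yields the ODE $\partial_t \norm{S(s,t)\vartheta}_{L^2}^2 \leq -c_d\ka\norm{\vartheta}_{L^1}^{-4/d}\norm{S(s,t)\vartheta}_{L^2}^{2+4/d}$, which integrates to the ultracontractive bound $\norm{S(s,t)\vartheta}_{L^2}\leq C_d(\ka t)^{-d/4}\norm{\vartheta}_{L^1}$. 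By duality (the adjoint propagator corresponds to the same diffusion driven by $-u$, which is also divergence-free) one obtains the symmetric estimate $\norm{S(s,t)\vartheta}_{L^\infty}\leq C_d(\ka t)^{-d/4}\norm{\vartheta}_{L^2}$.

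Next, given $t$, I would write $t = \tdis + m\tdis + \tdis$ with an integer $m$ to be chosen, and chain the three estimates via the Chapman--Kolmogorov semigroup property. The first interval produces the $L^1\to L^2$ gain with factor $C_d(\ka\tdis)^{-d/4}$; each of the middle $m$ intervals halves the $L^2$-norm by~\eqref{e:tdisDef}, contributing a factor $2^{-m}$; the last interval produces an $L^2\to L^\infty$ factor $C_d(\ka\tdis)^{-d/4}$. Composing,
\begin{equation*}
\norm{S(s,(m+2)\tdis)\vartheta}_{L^\infty}\leq C_d'(\ka\tdis)^{-d/2}\,2^{-m}\,\norm{\vartheta}_{L^1}\,.
\end{equation*}
Choosing $m = \lceil \log_2(2C_d') + (d/2)\log_2(1/(\ka\tdis))\rceil$ makes the prefactor at most~$1/2$, and the total time is $(m+2)\tdis\leq C\,\tdis\ln(1 + 1/(\ka\tdis))$, which is exactly~\eqref{e:L1Linf}. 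The bound for larger $t$ follows by appending further $L^\infty$-contractions.

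The main technical point is the Nash-based $L^1\to L^2$ ultracontractive estimate: one needs the Nash inequality on~$\T^d$ for mean-zero functions (so the gradient provides enough coercivity in the absence of decay at infinity), and one must use the $L^1$-contraction to freeze $\norm{\vartheta}_{L^1}$ as the denominator on the right-hand side of the ODE. Once this is in place, the duality step and the $L^1\to L^2\to L^\infty$ sandwich follow a standard hypercontractive pattern, and no finer property of $u$ beyond $\dv u = 0$ is needed.
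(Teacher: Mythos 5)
Your proposal is correct and follows essentially the same route as the paper: an $L^1\to L^2$ smoothing step, $m$ halvings of the $L^2$ norm via the definition of $\tdis$, an $L^2\to L^\infty$ smoothing step, and then choosing $m\sim \log_2\bigl((\kappa\tdis)^{-d/2}\bigr)$, which is admissible since $\kappa\tdis\le 1/(4\pi^2)$ makes $\ln(1+1/(\kappa\tdis))$ bounded below. The only difference is that you rederive the drift-independent $(\kappa\tdis)^{-d/4}$ ultracontractivity bounds via the mean-zero Nash inequality and duality, whereas the paper simply cites them (Lemma~5.6 of~\cite{ConstantinKiselevEA08}, Lemmas~3.1 and~3.3 of~\cite{FannjiangKiselevEA06}, Lemma~5.4 of~\cite{Zlatos10}); your derivation is the standard proof of those cited estimates.
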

\begin{remark}\label{r:L1LinfN}
  Since $\norm{\vartheta}_{L^1} \leq \norm{\vartheta}_{L^\infty}$, we can iterate~\eqref{e:L1Linf} to yield
  \begin{equation*}%\label{e:L1LinfN}
    \norm{\E^{(x, s)} \vartheta(X_{s+t}) }_{L^\infty} \leq 2^{-n} \norm{\vartheta}_{L^1}\,,
    \quad\text{for all }
    s \geq 0\,,~
    t \geq n C \tdis \ln \paren[\Big]{1 +  \frac{1}{\kappa \tdis}  } \,.
  \end{equation*}
\end{remark}
\begin{proof}
  For simplicity, and without loss of generality we assume $s = 0$.
  Using well known drift independent estimates (see for instance Lemma~5.6 in~\cite{ConstantinKiselevEA08}, Lemmas 3.1, 3.3 in~\cite{FannjiangKiselevEA06}, and Lemma 5.4 in~\cite{Zlatos10}) we know
  \begin{equation*}
    %\norm{\theta_{t + 2t^*_p}}_{L^\infty}
    %  \leq \frac{c}{(\kappa t^*_p)^{\frac{d}{2p}}} \norm{\theta_{t + t^*_p}}_{L^p} \,,
    \norm{\theta_{t + 2\tdis}}_{L^\infty}
      \leq \frac{c}{(\kappa \tdis)^{\frac{d}{4}}} \norm{\theta_{t + \tdis}}_{L^2} \,,
    \qquad\text{and}\qquad
    %\norm{\theta_{t^*_p}}_{L^p} \leq \frac{c}{(\kappa t^*_p)^{\frac{d}{2}\paren{1 - \frac{1}{p}}} } \norm{\theta_0}_{L^1}\,,
    \norm{\theta_{\tdis}}_{L^2} \leq \frac{c}{(\kappa \tdis)^{\frac{d}{4}} } \norm{\theta_0}_{L^1}\,,
  \end{equation*}
  for some dimensional constant $c = c(d)$.
  Now iterating~\eqref{e:tdisDef} we see
  \begin{equation*}
      \norm{\theta_{t + \tdis}}_{L^2}
	\leq 2^{- \floor{t/\tdis} } \norm{\theta_{\tdis}}_{L^2} 
	\leq 2^{1 - t/\tdis } \norm{\theta_{\tdis}}_{L^2} \,,
  \end{equation*}
  and hence
  \begin{equation*}
    \norm{\theta_{t + 2t^*_p}}_{L^\infty}
      \leq \frac{c}{(\kappa \tdis)^{\frac{d}{4}}} \norm{\theta_{t + \tdis}}_{L^2}
      \leq \frac{c 2^{1 - t/\tdis}}{(\kappa \tdis)^{\frac{d}{4}}} \norm{\theta_{\tdis}}_{L^2}
      \leq \frac{c^2 2^{1 - t/\tdis}}{(\kappa \tdis)^{\frac{d}{2}}} \norm{\theta_0}_{L^1}
  \end{equation*}
  Thus if we choose $t \geq C \tdis \ln ( 1 + 1/(\kappa \tdis) )$ for some sufficiently large constant $C = C(p, d)$, we obtain
  \begin{equation*}
    \norm{\theta_{t + 2\tdis}}_{L^\infty} \leq \frac{1}{2} \norm{\theta_0}_{L^1}\,.
  \end{equation*}
  This finishes the proof of Lemma~\ref{l:L1Linf}.
\end{proof}

We can now prove the second inequality in~\eqref{e:disMix}.
\begin{lemma}\label{l:mixDis}
  There exists a dimensional constant $C = C(d)$, independent of~$u$ and~$\kappa$ such that
	\begin{equation*}
		\tmix
			\leq C \tdis \ln \paren[\Big]{ 1 + \frac{1}{\ka \tdis} }\,.
	\end{equation*}
\end{lemma}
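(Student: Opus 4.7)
The plan is to reduce the mixing time bound to Lemma~\ref{l:L1Linf} via a standard $L^1$--$L^\infty$ duality argument; no new estimate is needed beyond what is already packaged there.  Fix $x \in \T^d$ and $s \geq 0$.  Since $\rho(x, s; \cdot, t)$ is a probability density on $\T^d$, the difference $\rho(x, s; \cdot, t) - 1$ has mean zero, so by $L^1$--$L^\infty$ duality there is a function $\vartheta \in L^\infty(\T^d)$ with $\norm{\vartheta}_{L^\infty} \leq 1$ realizing $\int_{\T^d}\abs{\rho(x, s; y, t) - 1}\,dy = \int_{\T^d}(\rho(x, s; y, t) - 1)\vartheta(y)\,dy$.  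Subtracting its mean and using that $\rho - 1$ has mean zero produces $\vartheta_0 \defeq \vartheta - \int_{\T^d}\vartheta \in \dot L^\infty(\T^d)$ with $\norm{\vartheta_0}_{L^\infty} \leq 2$, and
\begin{equation*}
  \int_{\T^d}\abs{\rho(x, s; y, t) - 1}\,dy
    = \int_{\T^d}\rho(x, s; y, t)\,\vartheta_0(y)\,dy
    = \E^{(x, s)}\vartheta_0(X_t).
\end{equation*}

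Next I would iterate Lemma~\ref{l:L1Linf} via Remark~\ref{r:L1LinfN}.  Let $C_0 = C(d)$ denote the constant from the lemma.  Three iterations yield $\sup_{x' \in \T^d}\abs{\E^{(x', s)}\vartheta_0(X_{s+t})} \leq 2^{-3}\norm{\vartheta_0}_{L^1}$ whenever $t \geq 3 C_0 \tdis \ln(1 + 1/(\ka\tdis))$.  Since the torus has unit volume, $\norm{\vartheta_0}_{L^1} \leq \norm{\vartheta_0}_{L^\infty} \leq 2$, so this bound is at most $1/4 < 1/2$, uniformly over all admissible $\vartheta_0$.  Combining this with the duality identity above (applied for each $x$ with its own $\vartheta_{0, x}$) gives $\sup_{x, s}\int_{\T^d}\abs{\rho(x, s; y, s+t) - 1}\,dy < 1/2$ for all such $t$, hence $\tmix \leq 3 C_0 \tdis \ln(1 + 1/(\ka\tdis))$, which is the desired inequality with $C = 3 C_0$.

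The only mild subtlety is that the optimizing test function $\vartheta_{0, x}$ depends on $x$, so Lemma~\ref{l:L1Linf} cannot be applied once with a single $\vartheta_0$.  However, the lemma provides a bound on $\sup_{x'}\abs{\E^{(x', s)}\vartheta_0(X_{s+t})}$ that depends on $\vartheta_0$ only through $\norm{\vartheta_0}_{L^1}$, which we bound by $2$ uniformly in $x$; applying the lemma separately for each $x$ and then taking $\sup_x$ therefore suffices.  I do not anticipate any genuinely hard step beyond recalling the duality and invoking Lemma~\ref{l:L1Linf}.
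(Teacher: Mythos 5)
Your proof is correct, and it rests on the same key ingredient as the paper's: iterating Lemma~\ref{l:L1Linf} via Remark~\ref{r:L1LinfN}. The difference is only in how the $L^1\to L^\infty$ bound is converted into a bound on $\int_{\T^d}\abs{\rho(x,s;y,s+t)-1}\,dy$: the paper applies the lemma with initial data $\rho(x,0;\cdot,\epsilon)-1$ (whose $L^1$ norm is at most $2$) and evolves the kernel itself through Chapman--Kolmogorov, while you pair $\rho(x,s;\cdot,s+t)-1$ against the recentered sign function $\vartheta_0 = \sgn(\rho-1)-\int_{\T^d}\sgn(\rho-1)$, which is mean zero and bounded by $2$, hence lies in $\dot L^2$ with $\norm{\vartheta_0}_{L^1}\leq 2$, and then apply the lemma verbatim to $\E^{(x,s)}\vartheta_0(X_{s+t})$. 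Your duality route is in fact slightly cleaner on one technical point: evolving the kernel in its forward variable is the adjoint (Fokker--Planck) evolution rather than the Markov semigroup $\vartheta\mapsto\E^{(\cdot,s)}\vartheta(X_{s+t})$ to which Lemma~\ref{l:L1Linf} literally applies, so the paper's argument implicitly uses that the lemma also holds for the adjoint flow (true, since the smoothing estimates are drift independent and the dissipation time of the adjoint agrees), whereas your version needs no such remark and also avoids the small-time regularization $\epsilon>0$. Your treatment of the dependence of $\vartheta_0$ on $(x,s,t)$ is fine because the constant in Lemma~\ref{l:L1Linf} is uniform over $\vartheta$, and three iterations giving $2^{-3}\cdot 2 = 1/4 < 1/2$ indeed yield $\tmix \leq 3 C_0 \tdis \ln\paren[\big]{1+\tfrac{1}{\ka\tdis}}$, which is the claimed bound.
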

\begin{proof}
  For simplicity, and without loss of generality we assume $s = 0$.
  Choose large enough so that~\eqref{e:L1Linf} holds.
  By standard regularity theory, we know that for any $\epsilon > 0$, the density $\rho(x, 0; y, \epsilon)$ is integrable in~$y$.
  Since $\rho \geq 0$, we note
  \begin{equation*}
    \int_{\T^d} \abs{ \rho(x, 0; y, \epsilon) - 1 } \, dy
      \leq \int_{\T^d} \paren{ \rho(x, 0; y, \epsilon) + 1 } \, dy
      = 2\,.
  \end{equation*}
  Let $C$ be the constant from~\eqref{e:L1Linf} and choose
  \begin{equation*}
    t = 2 C \tdis \ln \paren[\Big]{ 1 + \frac{1}{\kappa \tdis} } \,.
  \end{equation*}
  Iterating Lemma~\ref{l:L1Linf} (as in Remark~\ref{r:L1LinfN}), we note that for every $x \in \T^d$
  \begin{align*}
    \norm{\rho(x, 0; y, 2t + \epsilon) - 1}_{L^1(y)}
      &\leq \norm{\rho(x, 0; y, 2t + \epsilon) - 1}_{L^\infty(y)}
      \\
      &\leq \frac{1}{4} \norm{\rho(x, 0; y, \epsilon) - 1}_{L^1(y)}
      \leq \frac{1}{2} \,.
  \end{align*}
  This shows $\tmix \leq t$, finishing the proof.
\end{proof}

The proof of Proposition~\ref{p:tMixTDis} follows immediately from Lemmas~\ref{l:disMix} and~\ref{l:mixDis}.

\section{Coupling of Projections (Proof of Lemma~\ref{l:cellCoupling})}\label{s:cellCoupling}

In this section we prove Lemma~\ref{l:cellCoupling} showing that the projected processes $Y$, $\tilde Y$ (defined in~\eqref{e:Ydef}) will couple in time $O(\epsilon^2  / \kappa)$ in expectation.
Coupling of diffusions have been studied by many authors, dating back to Lindvall and Rogers~\cite{LindvallRogers86}.
In their original work, Lindvall and Rogers~\cite{LindvallRogers86} provide a method to couple diffusions in $\R^d$ by ``reflecting'' the noise.
Unfortunately, if we use their methods directly the bound we obtain on the coupling time will depend on the Lipschitz constant of the drift; in our case, this is $O(A/\ep)$ which is unbounded.
It is for this reason that we modify the cellular flows using the cutoff function~$\xi$.
With the cutoff, we have a central region in each square where there is no drift.
Once $Y, \tilde Y$ enter this region, they can be successfully coupled by reflection.

To carry out the details of the above plan, define
\begin{equation*}
  Q = [0, \epsilon/2]^2\,,
  \quad
  U = Q \cap \set{H > 1/2}\,,
  \quad
  U' = Q \cap \set{H > h_0}\,,
\end{equation*}
for some ${h_0 \in (3/4, 1)}$ that is independent of $\epsilon$, $A$, $\kappa$ and will be chosen shortly.
%Let $S(h_0) = \leb(U') / \ep^2$.
We will run $Y$ and $\tilde Y$ independently until they both enter~$U'$, and then reflect the noise until they couple.
To estimate the time taken by each of these steps we use the following results.

\begin{lemma}\label{l:crudeMixTimeBd}
  Let $u \in L^\infty( [0, \infty); W^{1, \infty}(\T^d))$ be a general (not necessarily cellular) divergence free drift, and consider the SDE~\eqref{e:sdeXu} on the $d$-dimensional torus $\T^d$.
	The mixing time of~$X$ is bounded by
	\begin{equation}\label{e:tmixX}
		\tmix(X) \leq \frac{C}{\kappa}\,,
	\end{equation}
	for some dimensional constant~$C = C(d)$ that is independent of~$u$ and~$\kappa$.
\end{lemma}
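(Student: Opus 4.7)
The strategy is to combine the elementary $L^2$-energy estimate with the dissipation-to-mixing comparison already proved in Proposition~\ref{p:tMixTDis}. Since $u$ is divergence free, multiplying \eqref{e:phiEq} by $\phi$ and integrating gives the energy identity $\tfrac{d}{dt}\|\phi\|_{L^2}^2 = -\kappa\|\nabla\phi\|_{L^2}^2$, which combined with the Poincar\'e inequality on $\dot L^2(\T^d)$ is exactly the content of \eqref{e:poincare}. Thus, with no further assumption on $u$, one has $\tdis(u,\kappa) \leq 1/(4\pi^2\kappa)$ for a constant depending only on $d$.

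With this in hand, the plan is to invoke Proposition~\ref{p:tMixTDis} to obtain
\begin{equation*}
\tmix \leq C\tdis \ln\!\Bigl(1 + \frac{1}{\kappa\tdis}\Bigr).
\end{equation*}
The right-hand side depends on $\tdis$ in a mildly nonlinear way, so one cannot blindly substitute upper bounds. To deal with this I would observe that the map $g(s) := s \ln(1 + 1/(\kappa s))$ is nondecreasing on the interval $(0, 1/\kappa]$. This is a one-line derivative check: $g'(s) = \ln(1 + 1/(\kappa s)) - (1/(\kappa s))/(1 + 1/(\kappa s))$, which is nonnegative by the elementary inequality $\ln(1+x) \geq x/(1+x)$ valid for all $x > -1$. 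Since the bound $\tdis \leq 1/(4\pi^2 \kappa)$ lies inside the monotone range, I may substitute this upper bound into $g$ to conclude
\begin{equation*}
\tmix \leq C \cdot \frac{1}{4\pi^2 \kappa} \cdot \ln(1 + 4\pi^2) \leq \frac{C'}{\kappa},
\end{equation*}
where $C'$ depends only on $d$.

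There is essentially no real obstacle here: Proposition~\ref{p:tMixTDis} does all the heavy lifting, and the remaining step is just the monotonicity observation that lets one legitimately substitute the universal $L^2$-based bound on $\tdis$ inside the logarithm. The only small technicality worth flagging is that Proposition~\ref{p:tMixTDis} is stated for sufficiently small $\kappa$, which implicitly restricts the range of validity of the bound \eqref{e:tmixX} to the same regime.
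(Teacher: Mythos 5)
Your proposal is correct and follows essentially the same route as the paper: the energy estimate with the Poincar\'e inequality gives $\tdis \leq C/\kappa$, and then Proposition~\ref{p:tMixTDis} converts this into the mixing time bound. Your monotonicity check for $s \mapsto s\ln(1+1/(\kappa s))$ is a detail the paper leaves implicit, and it is a valid (indeed welcome) justification for substituting the upper bound on $\tdis$ inside the logarithm.
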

\begin{remark}\label{r:crudeMixTimeBdRescaled}
  By rescaling, on a torus with side length~$\ell$, the bound~\eqref{e:tmixX} becomes
	\begin{gather}\label{e:tmixXprime}
		\tag{\ref*{e:tmixX}$'$}
		\tmix(X) \leq \frac{C \ell^2}{\kappa}\,,
	\end{gather}
	for some dimensional constant~$C$ that is independent of~$\ell$,~$u$ and~$\kappa$.
\end{remark}
\begin{remark}
  We believe that in this generality there exists a coupling for which $\E \tcpl \leq C / \kappa$, however we are presently unable to produce such a coupling.
\end{remark}

\begin{lemma}\label{l:mix1cell}
  Let $\tilde B$ be a Brownian motion that is independent of~$B$.
	%The mixing time of $Y$, $\tilde Y$ on $\T^2_\epsilon$ is at most $O(\epsilon^2 / \kappa)$, and is bounded independent of~$A$.
  %$Y$ and $\tilde Y$ run independently.
  There exists a time $t_1 \leq O(\ep^2 /\kappa )$ such that for all $t \ge t_1$, we have
  \begin{equation}\label{e:mix1cell}
    \inf_{y, \tilde y \in \T^2_\ep} \P^{(y, \tilde y)} \paren{Y_t,~ \tilde Y_t \in U'} \ge \frac{\abs{U'}^2}{4\epsilon^4}. 
  \end{equation}
  Here~$\abs{U'} = \leb(U')$ denotes the Lebesgue measure of~$U'$.
\end{lemma}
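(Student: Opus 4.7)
The plan is as follows. Since $\tilde B$ is independent of $B$, the processes $Y$ and $\tilde Y$ are independent, so
\begin{equation*}
  \P^{(y,\tilde y)}\paren{Y_t,~\tilde Y_t \in U'} = \P^y(Y_t \in U') \cdot \P^{\tilde y}(\tilde Y_t \in U').
\end{equation*}
It therefore suffices to exhibit $t_1 = O(\epsilon^2/\kappa)$ such that $\inf_{y \in \T^2_\epsilon} \P^y(Y_t \in U') \ge \abs{U'}/(2\epsilon^2)$ for every $t \ge t_1$; squaring then yields~\eqref{e:mix1cell}.

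To obtain this pointwise bound I would first invoke the rescaled form of Lemma~\ref{l:crudeMixTimeBd} (Remark~\ref{r:crudeMixTimeBdRescaled}) applied to~$Y$ on $\T^2_\epsilon$. Since the drift $Av$ is divergence-free and in $W^{1,\infty}$, this gives $\tmix(Y) \le C \epsilon^2/\kappa$ with $C$ independent of $\epsilon$, $A$, and $\kappa$. The crucial point is that this constant is uniform in the amplitude~$A$. The standard iteration of the mixing time bound via sub-multiplicativity of the total variation distance (the exact analog on $\T^2_\epsilon$ of equation~\eqref{e:ntmix}) then gives, at $t = n\,\tmix(Y)$,
\begin{equation*}
  \sup_{y \in \T^2_\epsilon} \int_{\T^2_\epsilon} \abs[\Big]{\rho^Y(y,0;y',t) - \frac{1}{\epsilon^2}}\, dy' \le 2^{-n},
\end{equation*}
where $\rho^Y$ denotes the transition density of~$Y$ on $\T^2_\epsilon$.

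Finally, a change of variables $\tilde x = 2\pi x/\epsilon$ shows that $c_0 \defeq \abs{U'}/\epsilon^2$ depends only on~$h_0$, hence is bounded below uniformly in $\epsilon$, $A$, $\kappa$. Choosing the universal integer $n \defeq \lceil \log_2(2/c_0) \rceil$ yields $2^{-n} \le c_0/2$, and since the TV distance to stationarity is monotone non-increasing in time, for every $t \ge t_1 \defeq n\,\tmix(Y)$ we obtain
\begin{equation*}
  \P^y(Y_t \in U') = \int_{U'} \rho^Y(y,0;y',t)\, dy' \ge \frac{\abs{U'}}{\epsilon^2} - \int_{U'} \abs[\Big]{\rho^Y - \frac{1}{\epsilon^2}}\, dy' \ge c_0 - \frac{c_0}{2} = \frac{\abs{U'}}{2 \epsilon^2}.
\end{equation*}
Since $t_1 = n \cdot O(\epsilon^2/\kappa) = O(\epsilon^2/\kappa)$, combining with the independence above establishes~\eqref{e:mix1cell}. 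There is no serious obstacle in the argument: the entire content is that Lemma~\ref{l:crudeMixTimeBd} supplies a mixing time bound independent of~$A$, so that the universal safety factor~$n$ required to beat the density $1/\epsilon^2$ does not corrupt the $O(\epsilon^2/\kappa)$ scaling.
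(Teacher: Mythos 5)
Your proposal is correct and follows essentially the same route as the paper: invoke Lemma~\ref{l:crudeMixTimeBd} rescaled to $\T^2_\epsilon$ (uniform in~$A$), iterate the total-variation mixing bound a universal number of times determined by the $\epsilon$-independent ratio $\abs{U'}/\epsilon^2$ to get $\P^y(Y_t\in U')\ge \abs{U'}/(2\epsilon^2)$, and then use independence of $Y$ and $\tilde Y$ to square the bound. No gaps; the paper's proof is just a terser version of the same argument.
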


\begin{lemma}\label{l:coupleBeforeExit}
	Choose the Brownian motion~$\tilde B$ to be the Brownian motion~$B$ reflected about the line perpendicular to $y - \tilde y$.
	Explicitly, choose $\tilde B = M B$, where
	\begin{equation*}
		M = I - 2 \hat n \hat n^T\,,
		\qquad\text{and}\qquad
		\hat n = \frac{y - \tilde y}{\abs{y - \tilde y}}\,.
	\end{equation*}
	%Suppose $B$ and $\tilde B$ are always symmetric with respect to the bisector between $Y$ and $\tilde Y$. 
  There exists a time $t_2 \leq O(\epsilon^2/\kappa)$ and a constant $c > 0$ such that
  \begin{equation*}
    \inf_{y, \tilde y \in U'} \P^{(y, \tilde y)} \paren{ \tcpl^Y \leq t_2 }  \geq c \,.
  \end{equation*}
\end{lemma}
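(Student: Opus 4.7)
The crucial observation is that $v$ vanishes identically not only on $U'$ but on all of $U$: since $h_0 > 3/4 > 1/2$, we have $U \subset \{H > 1/2\} \subset \{\xi = 0\}$, so $\xi H \equiv 0$ on a neighborhood of~$U$ and hence $v = \grad^\perp(\xi H) \equiv 0$ there. Consequently, as long as neither $Y$ nor $\tilde Y$ has exited~$U$, both processes are rescaled Brownian motions: $Y_t = y + \sqrt\ka\,B_t$ and $\tilde Y_t = \tilde y + \sqrt\ka\,M B_t$. Under the reflection coupling, a direct calculation gives
\begin{equation*}
	Y_t - \tilde Y_t = (y - \tilde y) + \sqrt\ka\,(I - M)B_t = (y - \tilde y) + 2\sqrt\ka\,(\hat n \cdot B_t)\,\hat n,
\end{equation*}
so coupling occurs precisely when the one-dimensional Brownian motion $W_t \defeq \hat n \cdot B_t$ first reaches the level $-a$, where $a \defeq \abs{y - \tilde y}/(2\sqrt\ka)$.

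For the reflection coupling to remain meaningful until this time, both $Y$ and $\tilde Y$ must stay in~$U$. Since $\p U \subset \{H = 1/2\}$, $\p U' \subset \{H = h_0\}$, and $|\grad H|$ is bounded above by $C/\ep$ on the annular region separating these two level sets (which is uniformly away from the corner critical points of~$H$), the mean value theorem yields $\dist(U', \p U) \geq c_0 \ep$ for a constant $c_0 > 0$ depending only on~$h_0$. Moreover $\diam(U') \leq C_0 \ep$ with $C_0 \to 0$ as $h_0 \to 1$, so by fixing $h_0$ sufficiently close to~$1$ we may arrange $C_0 < c_0$. Since $|MB_t| = |B_t|$, both processes remain in~$U$ until $T_b \defeq \inf\{t \geq 0 : |B_t| \geq b\}$ with $b \defeq c_0 \ep/\sqrt\ka$, and on this time interval $a \leq C_0 \ep/\sqrt\ka < b$. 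The coupling therefore succeeds on the event $\{\tau_W \leq T_b\}$, with $\tau_W \defeq \inf\{t : W_t = -a\}$.

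It thus suffices to prove that for some $t_2 = O(\ep^2/\ka)$ and some constant $c > 0$ independent of $\ep, A, \ka$,
\begin{equation*}
	\inf_{y, \tilde y \in U'} \P^{(y, \tilde y)}\paren{\tau_W \leq T_b \wedge t_2} \geq c.
\end{equation*}
By the Brownian scaling $B_t \mapsto (\sqrt\ka/\ep)\,B_{\ep^2 t/\ka}$, the inequality reduces to a purely dimensionless statement about a standard planar Brownian motion started at~$0$: its $\hat n$-component must hit the level $-a\sqrt\ka/\ep \leq C_0$ within a bounded time while its Euclidean norm stays below~$c_0$. Since $C_0 < c_0$ are absolute constants depending only on~$h_0$, this is a routine planar Brownian computation (for instance, estimate $\P(W_1 \leq -C_0$ and $\sup_{s \leq 1}|B_s| \leq c_0)$ and then shift to the appropriate stopping time).

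The only real obstacle is the geometric estimate $\dist(U', \p U) \gtrsim \ep$ and the quantitative control $\diam(U') \to 0$ as $h_0 \to 1$, which together let us choose $h_0$ so that the reflection-coupling argument closes. Once $h_0$ is fixed, everything downstream is a dimensionless Brownian calculation, and the factor $\ep^2/\ka$ in the bound on $t_2$ arises purely from the scaling used to remove the parameters.
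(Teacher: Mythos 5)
Your proof is correct and takes essentially the same approach as the paper: both exploit that the drift vanishes on $U$, run the mirror (reflection) coupling there, and choose $h_0$ close to $1$ so that $\abs{y-\tilde y}$ is small compared with $\dist(U',\partial U)$, with the $\epsilon^2/\kappa$ time scale coming from Brownian scaling. The only difference is in how the final positive-probability estimate is carried out --- the paper lets $Y$ exit a square with one side on the perpendicular bisector of the segment from $y$ to $\tilde y$ (each side equally likely by symmetry) and applies Markov's inequality to the expected exit time, whereas you project onto $\hat n$ and reduce to a dimensionless planar Brownian hitting event --- a cosmetic rather than structural difference.
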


Momentarily postponing the proofs of Lemmas~\ref{l:mix1cell} and~\ref{l:coupleBeforeExit}, we prove Lemma~\ref{l:cellCoupling}.

\begin{proof}[Proof of Lemma~\ref{l:cellCoupling}]
	%We consider a Bernoulli trial consisting of two phases, with the duration of each trial being $t_1 + t_2$. 
	Choose~$t_1$ according to Lemma~\ref{l:mix1cell} and run $Y$ and $\tilde Y$ independently until time $t_1$.
	Lemma~\ref{l:mix1cell} guarantees that at time~$t_1$ we have~\eqref{e:mix1cell}.
	(Note that $\abs{U'} = O(\ep^2)$, and so $\frac{\abs{U'}^2}{4 \ep^4} = O(1)$.)

	Now choose~$t_2$ and~$\tilde B$ according to Lemma~\ref{l:coupleBeforeExit}.
	This construction will guarantee 
	\begin{equation*}
		\inf_{y, \tilde y \in U'} \P^{(y, \tilde y)} \paren{ \tcpl^Y \geq (t_1 + t_2) } \le 1 - c'\,,
		\quad\text{where}\quad
		c' \defeq \frac{c \abs{U'}^2}{4 \epsilon^4} > 0\,,
	\end{equation*}
	and~$c$ is the constant in Lemma~\ref{l:coupleBeforeExit}.

	In the event that $\tcpl^Y > t_1 + t_2$, we simply repeat the above two steps.
	The Markov property will guarantee 
	\begin{equation*}
		\inf_{y, \tilde y \in U'} \P^{(y, \tilde y)} (\tcpl^Y \geq n (t_1 + t_2) ) \le (1-c')^n\,.
	\end{equation*}
	Thus, for any $y, \tilde y \in U'$ we see
	\begin{align*}
		\E^{(y, \tilde y)} \tcpl^Y
			&= \int_0^\infty \P^{(y, \tilde y)} ( \tcpl^Y \geq t ) \, dt
			\leq (t_1 + t_2) \sum_{n = 0}^\infty  \P^{(y, \tilde y)} ( \tcpl^Y \geq n(t_1 + t_2) )
		\\
			&\leq (t_1 + t_2) \sum_{n = 0}^\infty  (1 - c')^n
			\leq \frac{t_1 + t_2}{c'}\,,
	\end{align*}
	concluding the proof.
\end{proof}

It remains to prove Lemmas~\ref{l:crudeMixTimeBd}, \ref{l:mix1cell} and~\ref{l:coupleBeforeExit}.

\begin{proof}[Proof of Lemma~\ref{l:crudeMixTimeBd}]
	Let~$\phi$ solve~\eqref{e:phiEq}.
	Multiplying by~$\phi$, using the fact that $\dv u = 0$ and the Poincar\'e inequality shows
	\begin{equation*}
		\norm{\phi(s + t)}_{L^2} \leq e^{\lambda_1 \kappa t} \norm{\phi(s)}_{L^2}\,,
	\end{equation*}
	where~$\lambda_1$ is the first non-zero eigenvalue of the Laplacian on $\T^d$.
	This immediately implies~$\tdis(X) \leq 1 / (\lambda_1 \kappa)$, and using Proposition~\ref{p:tMixTDis} concludes the proof.
\end{proof}

\begin{proof}[Proof of Lemma \ref{l:mix1cell}]
	Using Lemma~\ref{l:crudeMixTimeBd} and rescaling (see Remark~\ref{r:crudeMixTimeBdRescaled}), we see
	\begin{equation*}
	  \tmix(Y) \leq \frac{C \epsilon^2}{\kappa}\,.
	\end{equation*}
	Now choose $N = \log_2\paren{ 2\epsilon^2/\abs{U'} }$, and note that $N$ is independent of~$\epsilon$.
	For every $t \geq N \tmix(Y)$ and $y \in \T^2_\epsilon$, we have
	%for all we can find a $N \in \N$, independent of $\epsilon, \kappa$ and $A$, such that for all $t \geq N \tmix(Y)$ we must have
	\begin{equation*}
		\abs[\Big]{
			\P^y (Y_t \in U')
				- \frac{\abs{U'}}{\epsilon^2}
		}
		\leq 2^{-N} \frac{\abs{U'}}{2\epsilon^2}
		\quad\text{and hence}\quad
			\P^y (Y_t \in U')
			\geq \frac{\abs{U'}}{2\epsilon^2}\,,
	\end{equation*}
	Since $\tilde Y$ is independent of~$Y$ and satisfies the same bound we obtain~\eqref{e:mix1cell} as claimed.
\end{proof}

\begin{proof}[Proof of Lemma \ref{l:coupleBeforeExit}]
	Notice that as long as $Y$, $\tilde Y$ remain in~$U$, they are simply rescaled standard Brownian motions.
	Let $\ell_b$ be perpendicular bisector of the line segment joining~$y$ and $\tilde y$, and $\tau_\ell$ be the hitting time of~$Y$ to $\ell_b$.
	The choice of~$\tilde B$ ensures that if $Y, \tilde Y$ remain inside~$U$ until they $\ell_b$, then they couple at time~$\tau_\ell$.

	%This reduces the lemma to bounding the hitting time to~$\ell_b$.
	In order to estimate the hitting time to~$\ell_b$ before exiting~$U$, let $R = \abs{y - \tilde y} / 2$ be the distance of $y$ to $\ell_b$.
	Let $K$ be the square with center $y$, side length $2R$, and one pair of sides parallel to $\ell_b$.
	Note that if $h_0$ is sufficiently closed to $1$, this square lies entirely in $U$. 
	Let $\tau_K$ be the exit time of $Y$ from~$K$, and note that
	\begin{equation*}
	  \P^{y, \tilde y} \paren{ \tcpl^Y \leq t }
		\geq \P^y \paren{ \tau_K \leq t, ~ Y_{\tau_K} \in \ell_b }
		= \frac{1}{4} \P^y \paren{ \tau_K \leq t } \,.
	\end{equation*}
	The last equality followed by symmetry, as at time $\tau_K$ it is equally likely that $Y_{\tau_K}$ belongs to any of the four sides of~$K$.

	The last term on the right can be bounded by Chebyshev's inequality, and the fact that the expected exit time of Brownian motion from a square is known.
	Namely,
	\begin{equation*}
	  \P^y(\tau_K \leq t)
			\geq 1 - \frac{\E^y \tau_K}{t}
			\geq 1 - \frac{R^2}{\kappa t}
			\geq 1 - \frac{\epsilon^2}{\kappa t}
			\geq \frac{1}{2}\,,
	\end{equation*}
	provided $t \geq 2 \epsilon^2 / \kappa$.
	Choosing $t_2 = 2 \epsilon^2 / \kappa$ concludes the proof.
\end{proof}

\iffalse
The following auxilliary lemma deals with the hitting time in the proof of Lemma~\ref{l:coupleBeforeExit}.
\begin{lemma}
	\label{l:BMinSquare}
	Consider a 2-D coordinate system. Four points $(-r, 0), \, (-r, 2r), \, (r, 2r)$, and $(r, 0)$ form a square, where $r > 0$. 
	Consider a scaled Brownian motion $Z_t$ given by 
	\begin{equation*}
		dZ_t = \sqrt{\ka} \, dB_t \quad Z_0 = (0, r) \,.
	\end{equation*}
	Let $\tau_K = \inf \{t \ge 0 \mid Z_t \notin K\}$. Then 
	\begin{equation}
		\label{e:tauK}
		\E \tau_K \le \frac{r^2}{2 \ka} \,,
	\end{equation}
	and 
	\begin{equation}
		\label{e:exitFromBisec}
		\P [Z_{\tau_K} \in (-r, r) \times \{0\}] = \frac{1}{4} \,.
	\end{equation}
\end{lemma}

\begin{proof}[Proof of Lemma \ref{l:BMinSquare}]
	If we look at a circle centered at $(0, r)$ with radius $\sqrt{2} r$, the expected exit time of $Z$ from that circle is bounded above by $\frac{(\sqrt{2} r)^2}{2 \ka} = \frac{r^2}{\ka}$ (see~\cite{Oksendal03}). 
	Since the square $K$ is totally contained in that circle, the exit time $\tau_K$ should be bounded by $\frac{r^2}{\ka}$ as well, which proves (\ref{e:tauK}).
	
	Notice that $Z_0$ is the center of the square $K$. By symmetry of Brownian motion, $Z_{\tau_K}$ settles on each side with equal probability. 
	That gives (\ref{e:exitFromBisec}).
\end{proof}
\fi

\section{Synchronization and Reflection}\label{s:phase2}

In this section we prove Lemmas~\ref{l:vhit}--\ref{l:hcpl} when $A \geq \kappa / \epsilon^4$, and the modified estimates~\eqref{e:vcplPrime}, \eqref{e:tauHboundPrime} when $A < \kappa / \epsilon^4$.
%As mentioned earlier, the only difference when $A < \kappa / \epsilon^4$ is in the boundary layer crossing estimates, and we present this in Section~\ref{s:2nd-case-prob-bound}, below.

\subsection{Boundary layer crossings when \texorpdfstring{$A \geq \kappa / \epsilon^4$}{A > k/e4}}

In order to prove Lemmas~\ref{l:vhit}--\ref{l:hcpl}, we will need bounds on the boundary layer crossing time.
These have been studied previously by various authors (see for instance~\cite{FreidlinWentzell12,IyerNovikov16,HairerIyerEA18}), and the version we quote here can be obtained by a direct rescaling of those in~\cite{IyerNovikov16}.
Define the \emph{boundary layer} by $\b_\delta$ by
\begin{equation}\label{e:Bdelta}
  \b_\delta \defeq \{\abs{H} < \delta\} \subset \T^2\,,
\end{equation}
where we recall from~\eqref{e:tdisBound} that~$\delta = \sqrt{\kappa / A}$.
The middle of the boundary layer is the level set~$\set{H = 0}$, and is known as the \emph{separatrix}.

We will now study repeated exits from the boundary layer, followed by returns to the separatrix.
Define the sequences of stopping times~$\sigma_n$ and~$\tau_n$ inductively by starting with $\sigma_0 = 0$ and $\tau_0 = \inf \{t \ge 0 \st H(X_t) = 0\}$.
For $n \geq 1$, define
\begin{align}
  \label{def:sigma-n}
  \sigma_n &= \inf \{t \geq \tau_{n-1} \st X_t \notin \b_\delta\}
  \\
  \label{def:tau-n}
  \tau_n &= \inf \{t \geq \sigma_n \st H(X_t) = 0\}\,.
\end{align}
That is,~$\sigma_n$ is the first exit from the boundary layer~$\b_\delta$ after time~$\tau_n$, and $\tau_n$ is the first return to the separatrix after time~$\sigma_n$.

At time~$\tau_n$ we must have either~$X^1 \in \frac{\epsilon}{2} \Z$, or~$X^2 \in \frac{\epsilon}{2} \Z$.
We now separate the times when $X^1 \in \frac{\epsilon}{2} \Z$, and when~$X^2 \in \frac{\epsilon}{2} \Z$.
Given~$i \in \set{1, 2}$, let~$\tau^i_0 = 0$ and inductively define
\begin{equation*}
  \tau_n^i
    = \inf \set[\Big]{\tau_k > \tau_{n-1}^i \st  X^i_{\tau_k} \in \frac{\ep}{2} \Z }\,.
\end{equation*}
%Intuitively, it is the $n$th hitting time of the process $X_t$ onto the separatrix by the $i$th coordinate.
We claim that up to a logarithmic factor, the chance that $\tau_n^i \leq t$ is comparable to the number of crossings of a standard Brownian motion over an interval of size~$\epsilon / \sqrt{A}$.
This is the first lemma we state. 
\begin{lemma}\label{l:nthhits}
	Suppose $A \gg \kappa / \epsilon^2$.
  There exists a constant $c > 0$ such that, for $n \in \N$, $i \in \{1, 2\}$, we have
  \begin{equation}
    \label{eq:nthhits}
    \inf_{\abs{H(x)} < \delta} \P^x \paren{\tau^i_n \le t} \geq \left( 1 - \dfrac{c n \ep \delta \abs{\ln \delta}}{\sqrt{\ka t}} \right)^+.
  \end{equation}
\end{lemma}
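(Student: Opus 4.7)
My plan is to reduce the problem to counting zero crossings of a time-changed Brownian motion. I would first apply Itô's formula to $H(X_t)$. Since $v = \nabla^\perp(\xi H)$ and $\nabla^\perp H \cdot \nabla H = 0$, the convective contribution is $H\,\nabla^\perp\xi\cdot\nabla H$, which vanishes on the set $\{|H| < 1/4\}\supset \b_\delta$ where $\xi \equiv 1$. Hence in this region
\begin{equation*}
    dH(X_t) = \tfrac{\kappa}{2}\Delta H(X_t)\,dt + \sqrt{\kappa}\,\nabla H(X_t)\cdot dB_t,
\end{equation*}
and by Dambis--Dubins--Schwarz, $H(X_t) = W_{\alpha_t}$ (up to a small bounded-variation correction) for a standard Brownian motion~$W$, with intrinsic clock $\alpha_t = \kappa\int_0^t |\nabla H(X_s)|^2\,ds$.

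Next, I would identify $\{\tau_n \leq t\}$ with the event that $W$ has completed at least $n$ excursions of amplitude $\geq \delta$ by intrinsic time $\alpha_t$. Each such excursion consumes order $\delta$ of the local time of $W$ at zero, and by Lévy's theorem this local time has the same distribution as $|W_\alpha|$. A one-sided Gaussian tail estimate then gives
\begin{equation*}
    \P(\text{at least $n$ such excursions by time $\alpha$}) \geq 1 - \frac{c n \delta}{\sqrt{\alpha}}.
\end{equation*}

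It remains to bound $\alpha_t$ from below. Along trajectories near the separatrix, each round trip through the boundary layer takes clock time $\sim \epsilon^2|\ln\delta|/A$ due to the logarithmic slowdown near the hyperbolic corners of each cell, and contributes order $\delta^2$ to $\alpha$ once $|\nabla H|^2$ is integrated against the corner-weighted occupation density (which picks up another factor of $|\ln \delta|$). This yields $\alpha_t \geq c\kappa t/(\epsilon^2 |\ln\delta|^2)$ with high probability, and substituting into the previous step gives $\P(\tau_n \leq t) \geq 1 - cn\epsilon\delta|\ln\delta|/\sqrt{\kappa t}$. Finally, by the reflection symmetry of the cellular flow, roughly half of the returns $\tau_k$ land on horizontal edges and half on vertical ones, so Chernoff's inequality yields $\tau_n^i \leq \tau_{2n + O(\sqrt{n})}$ with high probability, and absorbing constants yields~\eqref{eq:nthhits}.

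The main technical obstacle is the intrinsic-to-clock time conversion near the hyperbolic cell corners, which must be handled carefully to obtain the correct power of $|\ln\delta|$. This type of averaging is a standard ingredient in Freidlin--Wentzell theory and has been developed in detail in~\cite{FreidlinWentzell12,IyerNovikov16,HairerIyerEA18}; the necessary estimates can be rescaled to the present setting.
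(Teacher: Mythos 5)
Your outline follows the probabilistic route that the paper only alludes to: the paper itself gives no self-contained argument for this lemma, obtaining it instead as a direct rescaling of Lemma~2.2 of~\cite{IyerNovikov16} (proved there by PDE techniques), and remarking that probabilistic proofs of such estimates appear in~\cite{HairerKoralovEA16,HairerIyerEA18}. Your skeleton --- It\^o's formula for $H(X_t)$ in the region $\{\abs{H}<1/4\}$ where $\xi\equiv 1$, the Dambis--Dubins--Schwarz clock $\alpha_t=\ka\int_0^t\abs{\grad H(X_s)}^2\,ds$, excursion/local-time counting, and a clock lower bound carrying the corner factor --- is the right heuristic and is dimensionally consistent with~\eqref{eq:nthhits}. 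But as a proof it has genuine gaps that cannot all be delegated to the references.

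First, the entire quantitative content of the lemma sits in the assertion $\alpha_t\ge c\ka t/(\ep^2\abs{\ln\delta}^2)$ ``with high probability'': this is exactly the corner-slowdown occupation estimate, it must hold uniformly over starting points with $\abs{H(x)}<\delta$, and its failure probability must itself be bounded by a multiple of $n\ep\delta\abs{\ln\delta}/\sqrt{\ka t}$ so that it can be absorbed into the right-hand side of~\eqref{eq:nthhits}; none of this is addressed, and it is where all the work lies. Second, the representation $H(X_t)=W_{\alpha_t}$ (up to a small correction) is only valid while $\abs{H(X_t)}<1/4$; between $\sigma_n$ and $\tau_n$ the process may enter $\{1/4<\abs{H}<1/2\}$, where the advective drift of $H(X_t)$ is of order $A/\ep$, so the identification of $\{\tau_n\le t\}$ with an excursion count of a single time-changed Brownian motion must be localized to the layer rather than read off globally. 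Third, and most concretely flawed as stated, is the passage from $\tau_n$ to $\tau_n^i$ by ``reflection symmetry plus Chernoff'': the edge orientations of successive returns to the separatrix are not independent fair coins --- their joint law depends on the position along the cell boundary and on corner passages, and a trajectory in the interior of one edge may recross that same edge repeatedly --- so symmetry gives no concentration. What makes the vertical/horizontal separation work is the quantitative sweeping and corner-passage analysis of~\cite{IyerNovikov16} (Section~4), which is precisely the ingredient the paper imports; if you intend your argument to be independent of that work, this step needs to be proved, and if you intend to cite it, your proof reduces, like the paper's, to a rescaling of the estimates there.
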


This lemma is simply a rescaling of Lemma~2.2 in~\cite{IyerNovikov16}, and we refer the reader there for the proof.
While it applies whenever $A \gg \kappa/\epsilon^2$, it only yields the optimal bound when $A \geq \kappa / \epsilon^4$, and this is the only case we apply it in.
(The case $A < \kappa / \epsilon^4$ is covered in Section~\ref{s:2nd-case-prob-bound}, below.)

The proof of Lemma~\ref{l:nthhits} in~\cite{IyerNovikov16} uses PDE techniques from~\cite{ChildressSoward89,FannjiangPapanicolaou94,NovikovPapanicolaouEA05,IyerKomorowskiEA14}.
%The probabilistic bound in the case $A < \ka / \ep^4$
A proof of Lemma~\ref{l:nthhits} can be obtained by directly using probabilistic techniques, and similar estimates appeared in~\cite{HairerKoralovEA16,HairerIyerEA18}.

In order to apply Lemma~\ref{l:nthhits}, we need the process~$X$ to enter the boundary layer~$\b_\delta$.
This happens in time at most $O(\epsilon^2 / \kappa )$, and is the content of our next lemma.
\begin{lemma}\label{l:exit-int}
	Let $\sigma_e$ be the first hitting time of $X$ to the level set $\set{H = \delta}$ (i.e.\ $\sigma_e = \inf \{t \geq 0 \mid H(X_t) = \delta\}$).
	Then
	\begin{equation}
		\sup_{x \in \T^2} \E^{x} \sigma_e \le \frac{C \ep^2}{\ka} \,.
	\end{equation}
\end{lemma}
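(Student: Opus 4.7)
The plan is to exploit the fact that $H$ is $\epsilon$-periodic, so $H(X_t) = H(Y_t)$ where $Y = \Pi_\epsilon X$ is the projected process on the small torus $\T^2_\epsilon$. Consequently, bounding $\sigma_e$ reduces to bounding the hitting time of $Y$ to the level set $\{H = \delta\} \subset \T^2_\epsilon$. By Lemma~\ref{l:crudeMixTimeBd} applied on $\T^2_\epsilon$ (i.e.\ the rescaled form in Remark~\ref{r:crudeMixTimeBdRescaled}), we have $\tmix(Y) \leq C\epsilon^2/\kappa$, which already has the correct scaling.

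The key geometric observation is that the two regions $A^+ = \{H > \delta\}$ and $A^- = \{H < \delta\}$ each occupy a fixed positive fraction of~$\T^2_\epsilon$ whenever $\delta \ll 1$: $A^+$ contains $\{H > 1/2\}$ and $A^-$ contains $\{H < 0\}$, so $\leb(A^\pm) \geq c_0 \epsilon^2$ for an absolute constant $c_0 > 0$. Once $Y$ has approximately equilibrated, the distribution of $Y_{t_1}$ is close to uniform in total variation, so for $t_1$ a sufficiently large fixed multiple of $\tmix(Y)$ (which keeps $t_1 \leq C\epsilon^2/\kappa$),
\begin{equation*}
  \inf_{y \in \T^2_\epsilon} \P^y(Y_{t_1} \in A^\pm) \geq \frac{c_0}{2}\,.
\end{equation*}

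Now fix any starting point $x \in \T^2$. If $H(x) = \delta$ the claim is trivial; if $H(x) < \delta$, then by continuity of paths the event $\{Y_{t_1} \in A^+\}$ (which has probability at least $c_0/2$) forces $H(Y_s) = \delta$ for some $s \leq t_1$, and hence $\sigma_e \leq t_1$. The symmetric statement handles $H(x) > \delta$ via $A^-$. Consequently $\P^x(\sigma_e \leq t_1) \geq c_0/2$ uniformly in $x$, and applying the strong Markov property at integer multiples of $t_1$ yields the geometric tail bound $\P^x(\sigma_e > n t_1) \leq (1 - c_0/2)^n$. Summing the tail gives $\E^x \sigma_e \leq 2t_1/c_0 \leq C\epsilon^2/\kappa$.

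No step of this plan presents a serious obstacle: the rescaled mixing time bound is immediate from Lemma~\ref{l:crudeMixTimeBd}, the lower bound on $\leb(A^\pm)$ is a routine computation using the explicit form of $H$, and the passage from one-shot hitting probability to expected hitting time via the strong Markov property is standard. The only point worth flagging is that the argument is robust to the choice of starting point, including those in the region where the cutoff $\xi$ is active and $v$ no longer equals $\grad^\perp H$ — the estimate depends only on the mixing of $Y$ and on the geometry of the level sets of $H$, not on whether the drift preserves $H$.
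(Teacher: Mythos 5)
Your proposal is correct and follows essentially the same route as the paper: project to $\T^2_\epsilon$, invoke the rescaled mixing bound $\tmix(Y)\le C\epsilon^2/\kappa$ from Lemma~\ref{l:crudeMixTimeBd}, deduce a uniform one-shot probability of crossing the level set $\{H=\delta\}$ by continuity of trajectories, and iterate with the Markov property to get the geometric tail. The only (harmless) difference is bookkeeping: the paper argues via the complement of the connected component of $\{H>\delta\}$ containing the start point, while you split according to the sign of $H(x)-\delta$ and target $A^{\pm}$, which if anything treats the case $H(x)<\delta$ a bit more explicitly.
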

\begin{proof}
	We first project to the torus of side length~$\epsilon$, and note that $\sigma_e = \inf\set{ t \geq 0 \st H(Y_t) = \delta }$.
	Note $\set{ H > \delta }$ contains two connected components, each occupying an area of at most $1/4$ of the torus $\T^2_\epsilon$.
	For any $x \in \T^2_\epsilon$, let $U$ be the connected component of $\set{H > \delta}$ that contains~$x$.
	Thus, for any $t \geq \tmix(Y)$ we know
	\begin{equation*}
		\abs[\Big]{ \P^{x} (X_t \in U^c)  - \frac{\abs{U^c}}{\epsilon^2} } \leq \frac{1}{4} \,,
		\quad\text{and hence}\quad
		\P^{x_0} ( X_t \in U)  \geq \frac{1}{2}\,.
	\end{equation*}
	By continuity of trajectories we note that the event $\set{\sigma_e \le t} \supseteq \set{ X_t \in U^c}$, and so $\P^{x_0} ( \sigma_e < t ) \geq 1/2$.
	In the event that $\sigma_e > t$, we use the Markov property and repeat the  above argument to yield
	\begin{equation*}
	  \E^{x} \sigma_e \leq 2 \tmix(Y)\,.
	\end{equation*}
	By~\eqref{e:tmixXprime} with $\ell = \epsilon$ we know $\tmix(Y) \leq C \epsilon^2 / \kappa$, concluding the proof.
\end{proof}

\subsection{Proofs of the hitting time estimates (Lemmas~\ref{l:vhit} and~\ref{l:hhit})}\label{sb:hit}

Then we may estimate the first hit at vertical boundary lines.

\begin{proof}[Proof of Lemma~\ref{l:vhit}]
  Notice that, periodicity of~$v$ and the synchronous choice~$\tilde B = B$, implies $\sigma_v = \tilde \sigma_v$.
  Thus we only have to prove~\eqref{e:vhit}.
	Without loss of generality assume $(\Pi_\epsilon X_0)^1 \in [0, \epsilon/2)$.
	(We clarify that $(\Pi_\epsilon X_0)^1$ refers to the first coordinate of $\Pi_\epsilon X_0$.)
	If $(\Pi_\epsilon X_0)^1 = 0$, then $\sigma_v = 0$, and there is nothing to prove, and thus we may assume $(\Pi_\epsilon X_0)^1 \in (0, \epsilon/2)$.
	Let $V \subseteq \T^2_\epsilon$ be the set of all points $y$ such that $y_1 \in [\epsilon/2, \epsilon]$, and note that $V$ occupies half the area of $\T^2_\epsilon$.
	Thus for any $t \geq 2\tmix(Y)$ we see
	\begin{equation*}
		\abs[\Big]{ \P ( Y_t \in V ) - \frac{1}{2} } \leq \frac{1}{4}
		\quad\text{and hence}\quad
		\P ( Y_t \in V ) \geq \frac{1}{4}\,.
	\end{equation*}
	By continuity of trajectories, $\set{ \sigma_v \leq t } \supseteq \set{ Y_t \in V }$, and so $\P ( \sigma_v \le t ) \geq 1/4$.
	If $\sigma_v > t$, then we use the Markov property and repeat the above argument to show
	\begin{equation*}
		\E \sigma_v \leq 8 \tmix(Y) \leq \frac{C \epsilon^2 }{\kappa} \,,
	\end{equation*}
	as desired.
\end{proof}

\begin{proof}[Proof of Lemma~\ref{l:hhit}]
  The proof is identical to the proof of Lemma \ref{l:vhit}.
\end{proof}

\subsection{Coupling time estimates (Lemmas~\ref{l:vcpl} and~\ref{l:hcpl})}\label{sb:cpl}

We now turn our attention to Lemma~\ref{l:vcpl}.
Note first that by definition $v$ is~$\epsilon$ periodic and
\begin{align*}
  v_1(-x_1, x_2) &= -v_1(x_1, x_2)\,,
	&
  v_2(-x_1, x_2) &= v_2(x_1, x_2)\,,
	\\
  v_1(x_1, -x_2) &= v_1(x_1, x_2)\,,
	&
  v_2(x_1, -x_2) &= -v_2(x_1, x_2)\,,
	\\
	v( x_1 + \tfrac{\epsilon}{2}, x_2 ) &= -v(x_1, x_2)\,,
	&
	v( x_1, x_2  + \tfrac{\epsilon}{2} ) &= -v(x_1, x_2)\,.
\end{align*}
As a result choosing~$\tilde B = (-B_1, B_2)$ and the assumptions $\Pi_\epsilon X_0 = \Pi_\epsilon \tilde X$ and $X_0^1, \tilde X_0^1 \in \frac{\ep}{2} \Z$ imply
\begin{equation}\label{e:symmetry}
  X_t^1 = -\tilde X_t^1 \pmod{\epsilon}\,,
	\quad\text{and}\quad
  X_t^2 = \tilde X_t^2 \pmod{\epsilon}\,.
\end{equation}

Let
\begin{equation*}
	\ell_1 \defeq \set[\Big]{ \frac{X^1_0 + \tilde X^1_0}{2} }\times \T^1 \subseteq \T^2
\end{equation*}
be the vertical line half way between $X_0$ and $\tilde X_0$.
Note $(X^1_0 + \tilde X^1_0) / 2 \in \frac{\ep}{2} \Z$ and so~$\ell_1$ is contained in the separatrix.
By~\eqref{e:symmetry} we see that~$\tau_v$ is exactly the hitting time of~$X$ to $\ell_1$.
Thus we may now ignore~$\tilde X$ and simply estimate the hitting time of~$X$ to~$\ell_1$.

Note that $X_{\tau^1_n} \in (\frac{\epsilon}{2} \Z) \times \R $ for all $n$ and behaves like a random walk on the collection of vertical lines~$(\frac{\epsilon}{2} \Z) \times \T^1 \subseteq \T^2$.
There are $2/\epsilon$ such vertical lines in the torus $\T^2$, and so we expect that after $O(1/\epsilon^2)$ steps of this random walk, $X_{\tau^1_n}$ will land in our desired line segment~$\ell_1$.
This is our next result.

\begin{lemma}\label{l:tauvN-hit-ell1}
	Note that $\tau_v = \inf \{t \ge 0 \st X_{t} \in \ell_1\}$.
	There exists $p_0 > 0$, and a constant $C_1$, independent of $A, \ep, \ka$, such that, for $n = C_1 / \ep^2$, and $x \in \T^2$ such that $x_1 \in \frac{\ep}{2} \Z$, 
	\begin{equation*}
		\P^x \paren{ \tau_v \le \tau^1_n }
			\geq p_0\,.
	\end{equation*}
\end{lemma}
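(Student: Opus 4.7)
The plan is to analyze the embedded chain $Z_n \defeq X^1_{\tau_n^1} \bmod 1$, which takes values in the finite cyclic group $S \defeq (\epsilon/2)\Z / \Z$ of cardinality $N \defeq 2/\epsilon$. By the strong Markov property applied at each $\tau_n^1$, $Z$ is a discrete-time Markov chain on $S$, and the event $\set{\tau_v \le \tau_n^1}$ coincides with the event that $(Z_k)_{0 \le k \le n}$ visits the prescribed state $(X^1_0 + \tilde X^1_0)/2 \bmod 1$. Under the identification $S \cong \Z/N\Z$ via $k \leftrightarrow k\epsilon/2$, I would show that $Z$ is a translation-invariant and reflection-symmetric random walk on this cycle. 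The required symmetries of the transition kernel follow from the $\epsilon$-periodicity of $v$, the identity $v(x + \tfrac{\epsilon}{2} e_i) = -v(x)$ (which leaves hitting-time distributions to the separatrix unchanged), and the reflection symmetries $v_1(-x_1, x_2) = -v_1(x_1, x_2)$, $v_2(-x_1, x_2) = v_2(x_1, x_2)$ combined with the reflection invariance of Brownian motion.

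The next step is to establish a uniform lower bound on the probability of a one-step jump to an adjacent vertical boundary: there exists $\alpha > 0$, independent of $\epsilon, A, \kappa$, such that
\begin{equation*}
\P(Z_{n+1} = Z_n \pm \tfrac{\epsilon}{2} \mid Z_n = z) \ge \alpha\,.
\end{equation*}
Starting from a point on the separatrix at a vertical cell boundary, the process lies in the boundary layer $\b_\delta$; with uniform positive probability it exits into one of the adjacent cells, is carried approximately once around a streamline, and re-hits the separatrix on the neighboring vertical boundary. This is the one-cell analog of the boundary-layer counting estimate underlying Lemma~\ref{l:nthhits}.

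Given these two facts, $Z$ is comparable to a symmetric random walk on $\Z/N\Z$ whose per-step variance is bounded below by a universal constant. Classical hitting-time estimates for such walks (see for instance \cite{LevinPeresEA09}) then give that the expected hitting time of any fixed target from any starting point is at most $C N^2$, with $C$ depending only on $\alpha$. Applying Markov's inequality yields constants $C_1$ and $p_0 > 0$, both independent of $\epsilon, A, \kappa$, such that $\P^x(\tau_v \le \tau_n^1) \ge p_0$ for $n = C_1/\epsilon^2$.

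The main obstacle is the uniform lower bound $\alpha$ in the second step, which must not degenerate as $A \to \infty$. The danger is that sufficiently strong advection might ferry the process around a single cell and back to its starting vertical boundary with overwhelming probability, suppressing transitions to the neighboring boundaries. The saving feature is the scale-matching in the boundary layer: with $\delta = \sqrt{\kappa/A}$, the diffusive time to cross $\b_\delta$ is comparable to the advective time to traverse a streamline, so Brownian fluctuations redistribute the exit point nontrivially among the vertical boundaries of the cell.
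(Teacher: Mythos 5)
Your reduction to a random walk on the cycle $(\ep/2)\Z/\Z$ has a structural flaw: at the regeneration times $\tau^1_n$ the process sits on a vertical cell boundary, but its vertical coordinate (mod $\ep$) is not a function of $Z_n$, and the law of the next increment depends on it. So $Z$ is not a Markov chain on $S$, let alone a translation-invariant one, and the classical hitting-time bound for random walks on $\Z/N\Z$ that you invoke from~\cite{LevinPeresEA09} does not apply as stated. The defect is repairable, because by the reflection symmetries the increments are \emph{conditionally} symmetric and take values in $\set{0,\pm\ep/2}$, so the lifted horizontal coordinate $S_n=\hat X^1_{\tau^1_n}$ is a martingale with bounded increments; a per-step conditional variance lower bound together with optional stopping (or a Paley--Zygmund argument on $S_n$) then yields the hitting estimate. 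This is in fact the route the paper takes: it lifts $X$ to $\R^2$, proves $\P^{\hat x}(\abs{S_n}>1)\ge p_0$ for $n=C_1/\ep^2$ from the moment bounds $\E S_n^2\ge c_1 n\ep^2$ and $\E S_n^4\le c n^2\ep^4$ (Lemma~\ref{l:pullback}), and observes that a horizontal displacement exceeding $1$ forces a crossing of some lift of $\ell_1$.

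The more serious gap is your uniform one-step bound $\alpha>0$, independent of $A$: you correctly identify it as the crux, but you offer only a heuristic, and it does not follow from Lemma~\ref{l:nthhits}. That lemma controls only the \emph{times} $\tau^i_n$ of boundary-layer crossings; it carries no information about \emph{where} on the separatrix the process lands, which is exactly what your $\alpha$ quantifies and exactly what could degenerate as $A\to\infty$ in the scenario you describe. The paper closes this step by importing a spatial estimate from~\cite{IyerNovikov16}, namely the variance lower bound \eqref{e:varBound} saying $S_n$ has variance comparable to a random walk with steps of size $\ep$ (your per-step bound would imply it, and is essentially its pointwise form), together with the second- and fourth-moment upper bounds on single increments. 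Without a proof or a precise citation for this lower bound, your argument is missing its central quantitative input; with it, and with the martingale fix above replacing the cycle-random-walk theory, your plan becomes essentially the paper's proof.
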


Postponing the proof of Lemma~\ref{l:tauvN-hit-ell1} to Section~\ref{s:pullback}, we prove Lemma~\ref{l:vcpl}.

\begin{proof}[Proof of Lemma~\ref{l:vcpl}]
	As explained above, $\tau_v$ is the hitting time of~$X$ to the bisector~$\ell_1$.
	Using Lemmas~\ref{l:nthhits} and~\ref{l:tauvN-hit-ell1} we see that
	\begin{equation}
		\label{e:froml6.1}
	  \P (\tau_n^1 \leq t_1) \geq 1 - \frac{p_0}{2}\,,
		\quad 
		\text{where }
		t_1 = \frac{4c^2 C_1^2 \abs{\ln \delta}^2 }{p_0^2 \epsilon^2 A}
		\,,
		\quad
		n = \frac{C_1}{\epsilon^2}\,.
	\end{equation}
	Here $c$ is the constant from equation~\eqref{eq:nthhits}, and $p_0$,~$C_1$ are constants from Lemma~\ref{l:tauvN-hit-ell1}.
	With Lemma~\ref{l:tauvN-hit-ell1}, we also see that
	\begin{equation}
		\label{e:froml6.3}
		\P^x (\tau_v \le \tau^1_n) \ge p_0 \,.
	\end{equation}
	Combining~\eqref{e:froml6.1} and~\eqref{e:froml6.3} gives 
	\begin{equation*}
		\P^x \paren[\big]{\tau_v \le \tau^1_n \leq t_1 }  \geq \frac{p_0}{2} \,,
	\end{equation*}
	which implies $\P ( \tau_v \leq t_1 ) \geq \frac{p_0}{2}$.
	Using the Markov property and iterating this implies
	\begin{equation*}
		\P (\tau_v > k t_1)  \leq \paren[\Big]{1 - \frac{p_0}{2} }^k\,,
		\quad\text{and hence }
		\E \tau_v \leq \frac{2 t_1}{p_0}\,,
	\end{equation*}
	finishing the proof.
\end{proof}

\begin{proof}[Proof of Lemma~\ref{l:hcpl}]
  The proof is identical to the proof of Lemma \ref{l:vcpl}.
	Note that at times when $X^2_t = \tilde X^2_t$, we actually have $X_t = \tilde X_t$ and hence $X_{\tau_h} = \tilde X_{\tau_h}$.  
\end{proof}

\subsection{The hitting time to the bisector (Lemma~\ref{l:tauvN-hit-ell1})}\label{s:pullback}

In order to prove Lemma~\ref{l:tauvN-hit-ell1} we will lift trajectories of $X$ from the torus $\T^2$ to the covering space $\R^2$.
For clarity, we will denote the lifted process by $\hat X$.
Define the family of lines
\begin{equation*}
	\hat \ell_1 = \set[\Big]{ x \in \R^2 \st x_1 = n + \frac{n_0\epsilon }{2}\,, n \in \Z }\,,
\end{equation*}
where $n_0 \in \Z$ is chosen such that
\begin{equation*}
\ell_1 = \set[\Big]{ x \in \T^2 \st x_1 = \frac{n_0 \epsilon }{2} }\,.
\end{equation*}
Note that the event of $X$ hitting $\ell_1$ on $\T^2$ is exactly the same as the event of $\hat X$  hitting $\hat \ell_1$ on $\R^2$.
Moreover, if $\hat X$ travels a horizontal distance of at least $1$, then it must pass through one of the lines in~$\hat \ell_1$.
We will use this to estimate $\P(\tau_v \leq \tau_n^1)$.

\begin{lemma}\label{l:pullback}
	Suppose $\hat X$ satisfies the SDE~\eqref{eq:itodef} in~$\R^2$, with $\hat X_0 = \hat x \in \R^2$ such that $\hat x_1 = 0$.
	There exist constants $C_1$, $p_0 > 0$, independent of $A, \ep, \ka$, such that, for $n = C_1/\ep^2$ we have
	\begin{equation}\label{e:XhatTaun}
		\P^{\hat x} \paren{ \abs{\hat X_{\tau^1_n}^1} > 1 } \ge p_0 \,.
	\end{equation}
\end{lemma}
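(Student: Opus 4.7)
The plan is to view $M_k \defeq \hat X^1_{\tau_k^1}$ as a discrete-time martingale with bounded, non-degenerate increments and deduce~\eqref{e:XhatTaun} from a second-moment estimate. The symmetry $v_1(-x_1, x_2) = -v_1(x_1, x_2)$ together with the $\ep$-periodicity of~$v$ yield $v_1(2a - x_1, x_2) = -v_1(x_1, x_2)$ for every $a \in \frac{\ep}{2}\Z$. Consequently, starting $\hat X$ from any point on the vertical line $L_a \defeq \set{x_1 = a}$ and replacing $B^1$ by $-B^1$, the reflected process $(2a - \hat X^1, \hat X^2)$ solves the same SDE as $\hat X$ and hence has the same law. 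Since the event $\set{\hat X^1 \in \frac{\ep}{2}\Z}$ is preserved by this reflection, the conditional law of $M_k - M_{k-1}$ given $\f_{\tau_{k-1}^1}$ is symmetric around~$0$, which gives the martingale property. A closer inspection of the dynamics between two successive $\tau^1$-events moreover shows that the process is trapped in a vertical strip of width $\ep/2$: once it enters, say, the right cell from $L_a$, every subsequent return to a horizontal segment of the separatrix keeps it on the boundary of a cell in the same column, and only a return to a vertical segment --which must be $L_a$ or $L_{a+\ep/2}$-- terminates the excursion. Hence $\abs{M_k - M_{k-1}} \in \set{0,\, \ep/2}$.

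The heart of the proof, and the main obstacle, is to establish a uniform lower bound
\begin{equation*}
  \P\paren[\big]{ \abs{M_k - M_{k-1}} = \ep/2 \mid \f_{\tau_{k-1}^1} } \ge q_*\,,
\end{equation*}
with a constant $q_* > 0$ independent of $\ep$, $A$, and $\ka$. In words, conditional on entering the right cell from $L_a$, the process must reach the opposite vertical side $L_{a+\ep/2}$ with uniformly positive probability, rather than always returning to $L_a$ through a chain of horizontal-side excursions within the column. The natural approach is to adapt the boundary-layer / Freidlin--Wentzell analysis underlying Lemma~\ref{l:nthhits} (compare~\cite{FreidlinWentzell12,IyerKomorowskiEA14,IyerNovikov16,HairerIyerEA18}): on the circulation time scale $\ep^2/A$ the process mixes inside the boundary layer around a cell, and the fourfold reflection symmetry of~$v$ then forces each of the four cell sides to be hit with probability bounded below by a universal constant at the next return to the separatrix.

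Granting the previous two steps, $(M_k)$ is a martingale with $\abs{M_k - M_{k-1}} \le \ep/2$ and $\E[(M_k - M_{k-1})^2 \mid \f_{\tau_{k-1}^1}] \ge q_* \ep^2/4$. Summing and applying the Burkholder--Davis--Gundy inequality give
\begin{equation*}
  \E M_n^2 \ge \tfrac{q_*}{4}\, n\ep^2
  \quad\text{and}\quad
  \E M_n^4 \le C_{\mathrm{BDG}}\, n^2 \ep^4\,.
\end{equation*}
The Paley--Zygmund inequality then yields
\begin{equation*}
  \P\paren[\big]{ M_n^2 \ge \tfrac12 \E M_n^2 }
  \ge \tfrac14 \frac{(\E M_n^2)^2}{\E M_n^4}
  \ge \frac{q_*^2}{64\, C_{\mathrm{BDG}}}\,;
\end{equation*}
call this positive constant $p_0$. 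Choosing $C_1 \ge 8/q_*$ and $n = C_1/\ep^2$ ensures $\tfrac12 \E M_n^2 \ge 1$, so $\abs{\hat X^1_{\tau_n^1}} = \abs{M_n} \ge 1$ on an event of probability at least $p_0$, which is precisely~\eqref{e:XhatTaun}.
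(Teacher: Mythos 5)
Your overall strategy --- a second-moment lower bound plus a fourth-moment upper bound combined with a Paley--Zygmund type inequality --- is exactly the paper's route, and your reflection-symmetry argument for $\E M_n = 0$ (and for killing cross terms) is sound. The genuine gap is your claim that $\abs{M_k - M_{k-1}} \in \set{0, \ep/2}$, i.e.\ that between consecutive $\tau^1$-times the process is trapped in one vertical strip. This is false for the stopping times defined in \eqref{def:sigma-n}--\eqref{def:tau-n}: a time $\tau_j$ is only registered at the first return to the separatrix \emph{after an exit from the boundary layer} $\b_\delta$. During the sojourn inside $\b_\delta$ between $\tau_{j-1}$ and $\sigma_j$, the process can diffuse across the separatrix into neighbouring cells' peripheries and be advected around them, crossing several vertical lines $x_1 \in \frac{\ep}{2}\Z$ without any of these crossings counting as a return. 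Hence the increments of $M$ are multiples of $\ep/2$ with (roughly geometric) tails, not deterministically bounded by $\ep/2$, and your BDG step $\E M_n^4 \le C_{\mathrm{BDG}} n^2 \ep^4$ does not follow as stated. The paper repairs precisely this point by importing the single-increment moment bounds $\E\abs{\xi_m}^2 \le c_2\ep^2$ and $\E\abs{\xi_m}^4 \le c_2\ep^4$ from Lemma~2.1 (and Section~5) of \cite{IyerNovikov16}, and then expanding $\E S_n^4$ directly, using symmetry and the tower property for the cross terms.

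Second, the step you yourself call the heart of the proof --- a uniform nondegeneracy bound $q_*$ for the increments, which is what gives $\E M_n^2 \ge c_1 n \ep^2$ --- is only sketched (``adapt the boundary-layer analysis''), and this is exactly where the dependence on $\ep$, $A$, $\ka$ and the corner logarithm hides; the fourfold symmetry alone does not yield a bound uniform in $\delta$. The paper does not reprove it either, but it cites the specific variance lower bound \eqref{e:varBound} from \cite{IyerNovikov16} (the effective walk has steps of size comparable to $\ep$), which is the citation your argument needs to close. In short: same architecture as the paper, but as written your fourth-moment bound rests on a false bounded-increment claim and your second-moment lower bound on an unproved (though correctly identified) nondegeneracy estimate; both inputs are exactly what the paper takes from \cite{IyerNovikov16}.
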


\begin{proof}
	%\HZ[2022-09-17]{I saw some places are $\hat S_n$ and some are just $S_n$. It seems there's no ambiguity in the variables $S$, so I just unify all of them to be $S_n$.}
	Let $S_n = \hat X^1_{\tau^1_n}$, and observe that by symmetry of $v$ we must have $\E^{\hat x} S_n = 0$.
	If $\E^{\hat x} S_n^2 \geq 1$ we note
	\begin{equation*}
		\paren[\Big]{ \E^{\hat x} S_n^2 - 1 }^2
			%= \paren[\Big]{ \E^{\hat x} (\hat S_n^2 - 1)
			%	+ \E^{\hat x} \one_{\set{ \abs{\hat S_n} \geq 1 }}
			%	+ \E^{\hat x} (1 - \hat S_n^2) \one_{\set{ \abs{\hat S_n} \leq 1 }} }^2
			\leq \paren[\Big]{ \E^{\hat x}  S_n^2 \one_{\set{ \abs{ S_n} \geq 1 }} }^2
			\leq \E^{\hat x}  S_n^4 \P^{\hat x} \paren{ \abs{S_n} \geq 1 }
	\end{equation*}
	and hence
	\begin{equation}\label{e:petrovBound}
		\P^{\hat x} \paren{ \abs{S_n} \geq 1 } \ge \frac{(\E^{\hat x}  S_n^2  - 1)^2}{\E^{\hat x} S_n^4}\,.
	\end{equation}
	whenever $\var(S_n) > 1$. 

	To use~\eqref{e:petrovBound}, we need to show~$\E^{\hat x} S_n^2 \geq 1$, and find a suitable upper bound for~$\E^{\hat x} S_n^4$.
	For the first part we note~\cite{IyerNovikov16} shows that the variance of $S_n$ is comparable to that of a random walk with steps of size~$\epsilon$.
	That is, we know
	\begin{equation}\label{e:varBound}
		\E^{\hat x} S_n^2 \ge c_1 n \ep^2 \,,
	\end{equation}
	for some constant $c_1 > 0$, that is independent of $\epsilon$, $A$ and $\kappa$.
	Thus choosing $n = 2 / (c_1 \ep^2)$ will guarantee $\E^{\hat x} S_n^2 \geq 1$.
	
	For the second part we need to find an upper bound for $\E S_n^4$. 
	For simplicity, let $\xi_m = \hat X^1_{\tau^1_{m+1}} - \hat X^1_{\tau^1_m}$, with $\tau^1_0 = 0$, so that $S_n = \xi_0 + \cdots + \xi_{n-1}$. 
	Notice
	\begin{equation*}
		\E^x S_n^4
			= \sum_{m=0}^{n-1} \E^x \abs{\xi_m}^4
				+  6 \sum_{m' = 1}^{n-1} \sum_{m = 0}^{m'-1} \E^x  \abs{\xi_m}^2 \abs{\xi_{m'}}^2  \,,
	\end{equation*}
	since the cross terms vanish by symmetry.

	From Lemma~2.1 in~\cite{IyerNovikov16}, we know that
	\begin{equation*}
		%\E^{\hat x} \abs{\hat X^1_{\tau^1_{m+1}} - \hat X^1_{\tau^1_m}}^2 \le c_2 \ep^2 \,,
		\E^{\hat x} \abs{\xi_m}^2 \le c_2 \ep^2 \,,
	\end{equation*}
	for some finite constant $c_2$ that is independent of~$\epsilon$, $A$ and $\kappa$.
	The same proof (Section~5 in~\cite{IyerNovikov16}) also shows that
	\begin{equation*}
		\E^{\hat x} \abs{\xi_m}^4 \le c_2 \ep^4 \,.
	\end{equation*}
	Moreover, for $m < m'$, by tower property, 
	\begin{equation*}
		\E^{\hat x} \paren{ \abs{\xi_m}^2 \abs{\xi_{m'}}^2 }
			= \E^{\hat x} \paren[\Big]{ \abs{\xi_m}^2 \E^{\hat X_{\tau^1_{m+1}}} \abs{\xi_{m'}}^2 }
			\le \E^x \left[ \abs{\xi_m}^2 c_2 \ep^2 \right]  \le (c_2 \ep^2)^2 \,.
	\end{equation*}
	Thus
	\begin{equation}
		\label{e:4mnt}
		\E^{\hat x} S_n^4 \le c_2 n \ep^4 + c_2^2 n^2 \ep^4 \le c n^2 \ep^4 \,,
	\end{equation}
	where $c = 2 c_2 (1 + c_2)$. 

	Combining~\eqref{e:petrovBound}, \eqref{e:varBound} and~\eqref{e:4mnt} we see 
	\begin{equation}
		\P^{\hat x} \paren{ \abs{S_n} > 1 } \ge \frac{(c_1 n \ep^2 - 1)^2}{c n^2 \ep^4} \,.
	\end{equation}
	Choosing $n = C_1 / \ep^2$ for some large constant $C_1$, we obtain~\eqref{e:XhatTaun} as desired.
\end{proof}

Using this, we prove Lemma~\ref{l:tauvN-hit-ell1}.

\begin{proof}[Proof of Lemma~\ref{l:tauvN-hit-ell1}]
	Note that $x_1 \in \frac{\ep}{2} \Z$.
	Using symmetry and periodicity, we may, without loss of generality, assume $x_1 = 0$. 
	
	Lifting the process~$X$ to $\R^2$, we recall that when $\abs{\hat X^1_t} \geq 1$, the trajectory of $\hat X$ must have passed through one of the lines in~$\hat \ell_1$.
	This implies
	\begin{equation*}
		\P^x\paren[\big]{ \tau_v \leq \tau^1_n }
			\geq \P^{\hat x} \paren[\big]{ \abs{\hat X^1_{\tau_n^1}} \geq 1 }\,,
	\end{equation*}
	and applying Lemma~\ref{l:pullback} concludes the proof.
	\iffalse
	Now, pull back the whole setting to $\R^2$. 
	The diffusion starts on the vertical axis and satisfies the SDE~\eqref{eq:itodef} on $\R^2$. 
	By Lemma~\ref{l:pullback}, we know there exist constants $C_1$ and $p_0$ such that, for $n = C_1 / \ep^2$, 
	\begin{equation}
		\label{e:frompullback}
		\P^x \left[ \abs{X^1_{\tau^1_n}} > 1 \right] \ge p_0 \,.
	\end{equation}
	
	When $\abs{X^1_{\tau^1_n}} > 1$, there exists some $t \le \tau^1_n$ such that $X_t^1 \in \{x_1 = a\} \cup \{x_1 = a - 1\}$, due to continuity of trajectories.  
	If we push forward to $\T^2$, this is exactly equivalent to $X_t^1 \in \{x_1 = a\}$, {i.e.}, $X_t \in \ell_1$. 
	So such $t$ corresponds to $\tau_v$ on this event. 
	This along with~\eqref{e:frompullback} implies 
	\begin{equation*}
		\P^x \big[\tau_v \le \tau^1_n\big] \ge p_0 \,.
	\end{equation*}
	\fi
\end{proof}

\subsection{Boundary layer crossings when \texorpdfstring{$A \leq \ka /\ep^4$}{A<k/e2}.}\label{s:2nd-case-prob-bound}
\begin{@empty}
We now prove the crossing estimates~\eqref{e:vcplPrime} and~\eqref{e:tauHboundPrime} in the case $A \leq \kappa / \epsilon^4$.
%Note that when $A \leq \ka / \ep^4$, we see that $\frac{1}{A \ep^2} \geq \frac{\ep^2}{\ka}$. 
%This means that the second term in~\eqref{e:vcpl} dominates the bound, which tends to infinity as $\ep$ decreases to zero. 
In this case, there is a better estimate on the boundary layer crossing times than Lemma~\ref{l:nthhits}, and we state this below.

In order to use estimates from Koralov~\cite{Koralov04}, we slightly modify the definition of~$\tau_n$.
Define $\check \tau_{-1} = 0$ and inductively define $\check \tau_n$ to be the first time after~$\check \tau_{n-1}$ that~$X$ returns to the separatrix $\set{H = 0}$ after crossing one of the cell diagonals.
It is known that the process~$X_{\check \tau_k}$ essentially performs a random walk on the skeleton of $4/\epsilon^2$ cell edges (see for instance~\cite{FreidlinWentzell93,FreidlinWentzell94,Koralov04}).
In order to follow our coupling argument, we separate the times when $X_{\check \tau_n}^1 \in \frac{\epsilon}{2} \Z$ or~$X_{\check \tau_n}^2 \in \frac{\epsilon}{2} \Z$ by defining
\begin{equation}
	\check \tau_n^i
    = \inf \set[\Big]{\check \tau_k > \check \tau_{n-1}^i \st  X^i_{\check \tau_k} \in \frac{\ep}{2} \Z }\,.
\end{equation}
Thus, at time $\check \tau_n^1$, the first coordinate $X^1_{\check \tau_n}$ has essentially performed~$n$ steps of a random walk.
To prove~\eqref{e:vcplPrime}, we will use the following bound on $\check \tau_n$.
\begin{lemma}
	If $\epsilon^2 / \kappa \ll 1$ then there exists a constant $c > 0$ such that, for $n \in \N$, we have 
	\begin{equation}\label{eq:nthhits-modify}
		\inf_{H(x) = 0} \E^x \check \tau_n \le \frac{c n \ep^2}{\sqrt{\ka A}} \,.
	\end{equation}
\end{lemma}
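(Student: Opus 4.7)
The plan is a two-step reduction: first apply the strong Markov property to reduce the $n$-step expectation to a uniform one-step estimate, and then invoke effective-diffusivity bounds from Koralov~\cite{Koralov04}.

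Since $X_{\check\tau_k} \in \{H = 0\}$ for every $k \geq 0$, iterating the strong Markov property at $\check\tau_0, \check\tau_1, \dots, \check\tau_{n-1}$ gives
\begin{equation*}
  \E^x \check\tau_n
    = \sum_{k=0}^{n} \E^x \bracks{\check\tau_k - \check\tau_{k-1}}
    = \sum_{k=0}^{n} \E^x \bracks{\E^{X_{\check\tau_{k-1}}} \check\tau_0}
    \leq (n+1) \sup_{y \,:\, H(y) = 0} \E^y \check\tau_0.
\end{equation*}
Thus the problem reduces to proving the one-step bound
\begin{equation*}
  \bar T \defeq \sup_{y \,:\, H(y) = 0} \E^y \check\tau_0 \leq \frac{c \epsilon^2}{\sqrt{\kappa A}}.
\end{equation*}

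The heuristic behind the one-step bound is that in the regime $\epsilon^2/\kappa \ll 1$ and $A \leq \kappa/\epsilon^4$, the process homogenizes with effective diffusivity $D_\eff \asymp \sqrt{\kappa A}$, so the expected time for $X$ to traverse a single cell of side length~$\epsilon$ (starting on and returning to the separatrix, having crossed a diagonal) should be $O(\epsilon^2/D_\eff) = O(\epsilon^2/\sqrt{\kappa A})$. I would extract $\bar T \leq c\epsilon^2/\sqrt{\kappa A}$ directly from the quantitative cell-exit estimates in~\cite{Koralov04}, which bound the mean exit time from a single cell at exactly this rate.

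The main obstacle is that much of~\cite{Koralov04} is phrased as weak convergence on the Reeb graph rather than as a direct expectation bound for $\check\tau_0$. If extracting the quantitative estimate from~\cite{Koralov04} proves delicate, an alternative is a self-contained probabilistic argument in the spirit of Lemma~\ref{l:nthhits}: outside the boundary layer~$\b_\delta$, the strong drift~$Av$ sweeps $X$ around a streamline in time $O(1/A)$, while inside~$\b_\delta$ (a strip of width $\epsilon\delta$) the transverse motion is that of a Brownian motion with variance $\kappa t$; balancing the $O(1/\delta) = O(\sqrt{A/\kappa})$ boundary-layer excursions needed before $X$ escapes to the neighboring cell diagonal against the $O(\epsilon^2/A + \epsilon\delta/\sqrt{\kappa A}) = O(\epsilon\delta/\sqrt{\kappa A})$ cost of each excursion produces the desired bound. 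Unlike in the $A \geq \kappa/\epsilon^4$ case (Lemma~\ref{l:nthhits}), no logarithmic loss arises here because the boundary layer~$\b_\delta$ is thick enough ($\delta \geq \epsilon^2$) to avoid the slow passage near the corners of the cell.
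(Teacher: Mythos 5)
Your proposal is correct and follows essentially the same route as the paper: reduce to a uniform one-step estimate on the separatrix via the strong Markov property, then obtain that one-step bound from Koralov~\cite{Koralov04}. Your worry about extracting a quantitative estimate from Koralov is resolved exactly as you hope: the paper cites the explicit bound~(20) there, which after rescaling the cell to side length~$\epsilon$ gives $\E^x \check\tau_1 \le c\,\epsilon^2/\sqrt{\kappa A}$ for all $x$ with $H(x)=0$, so no self-contained boundary-layer argument is needed.
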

\begin{proof}
	By rescaling the bound (20) in~\cite{Koralov04} we immediately see
	\begin{equation*}
		\E^x \check \tau_1 \le \frac{c \ep^2}{\sqrt{\ka A}}\,, \qquad \text{whenever} \quad H(x) = 0 \,.
	\end{equation*}
	Now~\eqref{eq:nthhits-modify} follows immediately from the Markov property and linearity.
\end{proof}

In order to prove~\eqref{e:vcplPrime}, we first note that $\check \tau_n^i$ also satisfies~\eqref{eq:nthhits-modify}.
This follows by the same argument in Section~4 of~\cite{IyerNovikov16}.
We now follow the proof of Lemma~\ref{l:vcpl}, with one modification.
Instead of~\eqref{e:froml6.1}, we have
\begin{equation}\label{e:from6.1Prime}
	\tag{\ref{e:froml6.1}$'$}
	\P (\check \tau_n^1 \leq t_1) \geq 1 - \frac{p_0}{2}\,,
	\quad 
	\text{where }
	t_1 = \frac{2 c C_1}{p_0 \sqrt{\kappa A} }
	\,,
	\quad
	n = \frac{C_1}{\epsilon^2}\,.
\end{equation}
Now following the proof of Lemma~\ref{l:vcpl} will yield~\eqref{e:vcplPrime} as desired.
The proof of~\eqref{e:tauHboundPrime} is similar.
\end{@empty}

\bibliographystyle{halpha-abbrv}
\bibliography{refs,preprints}
% DO NOT EDIT THIS LINE: $Id$
\end{document}